\newtheorem{theorem}{Theorem}[section]
\newtheorem{corollary}[theorem]{Corollary}
\newtheorem{lemma}[theorem]{Lemma}
\newtheorem{proposition}[theorem]{Proposition}
\newtheorem{definition}{Definition}[section]
\newtheorem{assumption}[theorem]{Assumption}
\newcommand{\ex}{{\bf\sf E}}
\newcommand{\st}{\textnormal{s.t.}}
\newcommand{\sign}{\textnormal{sign}}
\newcommand{\RR}{\mathbf R}
\newcommand{\be}{\begin{equation}}
\newcommand{\ee}{\end{equation}}
\newcommand{\ba}{\begin{array}}
\newcommand{\ea}{\end{array}}
\newcommand{\bpm}{\begin{pmatrix}}
\newcommand{\epm}{\end{pmatrix}}
\newcommand{\bea}{\begin{eqnarray}}
\newcommand{\eea}{\end{eqnarray}}
\newcommand{\beaa}{\begin{eqnarray*}}
\newcommand{\eeaa}{\end{eqnarray*}}
\begin{document}

\title{Iteration Bounds for Finding the $\epsilon$-Stationary Points \\ for Structured Nonconvex Optimization}

\author{
Bo Jiang
\thanks{Research Center for Management Science and Data Analytics, School of Information Management and Engineering, Shanghai University of Finance and Economics, Shanghai 200433, China. Email: isyebojiang@gmail.com. Research of this author was supported in part by National Science Foundation of China (Grant 11401364).} \and
Shuzhong Zhang
\thanks{Department of Industrial and Systems Engineering, University of Minnesota, Minneapolis, MN 55455, USA. Email: zhangs@umn.edu. Research of this author was supported in part by National Science Foundation (Grant CMMI-1161242).}}

\date{\today}

\maketitle

\begin{abstract}
In this paper we study proximal conditional-gradient (CG) and proximal gradient-projection type algorithms for a block-structured constrained nonconvex optimization model, which arises naturally from tensor data analysis. First, we introduce a new notion of $\epsilon$-stationarity, which is suitable for the structured problem under consideration. 
We then propose two types of first-order algorithms for the model based on the proximal conditional-gradient (CG) method and the proximal gradient-projection method respectively. If the nonconvex objective function is in the form of mathematical expectation, we then discuss how to incorporate randomized sampling to avoid computing the expectations exactly. For the general block optimization model, the proximal subroutines are performed for each block according to either the block-coordinate-descent (BCD) or the maximum-block-improvement (MBI) updating rule. If the gradient of the nonconvex part of the objective $f$ satisfies $\| \nabla f(x) - \nabla f(y)\|_q \le M \|x-y\|_p^\delta$ where $\delta=p/q$ with $1/p+1/q=1$, then we prove that the new algorithms have an overall iteration complexity bound of $O(1/\epsilon^q)$ in finding an $\epsilon$-stationary solution. If $f$ is concave then the iteration complexity reduces to $O(1/\epsilon)$. Our numerical experiments for tensor approximation problems show promising performances of the new solution algorithms.

\vspace{1cm}

\noindent {\bf Keywords:} constrained nonconvex optimization, block variables, iteration complexity bounds, conditional gradient algorithm

\vspace{0.50cm}

\noindent {\bf Mathematics Subject Classification:} 90C26, 90C06, 90C15, 15A69.

\end{abstract}

\newpage

\section{Introduction}
The first-order algorithms and their iteration complexity analysis for nonconvex optimization problems have recently attracted considerable research attention; see e.g.~\cite{Nesterov04,CartisGouldToint10,CartisGouldToint11,CartisGouldToint13a,CartisGouldToint13b,GhadimiLanZhang13,GhadimiLan13}. In this paper,
we aim to solve the following block-structured nonconvex and nonsmooth optimization problem:
\begin{equation} \label{noncvx-block-opt}
\begin{array}{lll} \min & f(x_1,\cdots, x_d) + \sum_{i=1}^{d}h_i(x_i) \\ \st & x_i \in S_i \subseteq\RR^{n_i}, \quad i=1,\ldots,d, &  \end{array}
\end{equation}
where $f$ is differentiable but possibly nonconvex, and $h_i$ is convex but possibly nonsmooth, for $i=1,\ldots, d$.
Such optimization models arise from a variety of applications. As an example, in statistics it is often desirable to find a {\em regulated}\/ least square solution where the least square term is nonconvex, which falls into this category. Consider, for instance, the so-called sparse tensor PCA problem, where one wishes to find the best sparse rank-one approximation for a given tensor. Mathematically, this can be formulated by the following optimization model~\cite{Allen12}:
\[
\begin{array}{ll}
\min & -{\cal A} (x_1, x_2, \cdots , x_d)+ \rho \sum_{i=1}^d \| x_i \|_1 \\
\st & x_i \in S_i = \{x \,|\, \|x\|_2^2 \le 1\},\, i=1,2,...,d,
\end{array}
\]
where ${\cal A}$ is a given $d$-dimensional tensor data. The $L_1$-norm in the objective is used to promote the sparsity of the vectors $x_i$, $i=1,2,...,d$. The above problem is clearly an instance of \eqref{noncvx-block-opt}.

Towards eventually solving \eqref{noncvx-block-opt}, we first consider the special case when there is only one block of variables:
\begin{eqnarray}\label{prob:nonconvex}
\begin{array}{lll} \min & \Phi(x):=f(x) + h(x) \\ \st & x \in S \subseteq\RR^n.&  \end{array}\end{eqnarray}
Again, here $f$ is assumed to be differentiable but possibly nonconvex, and $h$ is convex but possibly nonsmooth; $S$ is assumed to be a compact and convex set, and
$\Phi^*$ is the optimal value of~\eqref{prob:nonconvex}. We denote
\begin{equation}\label{radius}
\mbox{\rm diam}_p(S) = \max_{x,y\, \in S}\|x -y \|_p,
\end{equation}
where $\|x \|_p = (\sum_{i=1}^{n}|x_i|^p)^{1/p}$.
In this paper, we shall propose
two types of first-order algorithms to find a certain {\it $\epsilon$-solution}\/ for \eqref{prob:nonconvex}. A first question arises in this context: {\em Is there a reasonable definition of $\epsilon$-solution (or rather, $\epsilon$-stationary solution) to aim for?} For the smooth unconstrained version of~\eqref{prob:nonconvex}, i.e. $\Phi(x) = f(x)$ and $S = \RR^n$, the natural definition of $\epsilon$-stationary point is:
$$
\| \nabla \Phi(x) \|_2 = \| \nabla f(x) \|_2 \le \epsilon.
$$
Nesterov~\cite{Nesterov04} and Cartis et al.~\cite{CartisGouldToint11} showed that the gradient decent type method with some properly chosen step size needs $O({1}/{\epsilon^2})$ iterations to find such a point. Moreover, Cartis et al.~\cite{CartisGouldToint10} constructed an example to show that the $O({1}/{\epsilon^2})$ complexity is actually tight for the steepest-descent algorithm.

However, the case for the constrained nonconvex optimization is more complicated. In fact, there are multiple quality measurements for approximative stationary points.
Cartis et al.~\cite{CartisGouldToint13a} proposed the following measure:
$$
\chi_S(x) := \left| \min\limits_{x+d \in S, \|d\|_2\le 1}\nabla f(x)^{\top}d \right|.
$$
Furthermore, they showed that it requires no more than $O({1}/{\epsilon^2})$ iterations for their adaptive cubic regularization algorithm to find a point $x$
such that
\begin{equation}\label{measure-cartis}
\chi_S(x) \le \epsilon.
\end{equation}
Along a related but different line,
Ghadimi et al.~\cite{GhadimiLanZhang13} used the squared norm of the residual for some generalized projection to evaluate the quality of solution. Specifically, the residual at point $x$ is defined as
$$
P_{S}(x,\gamma):=\frac{1}{\gamma} (x - x^+),
$$
where
$$x^+ = \arg\min_{y \in S}{\nabla f(x)^{\top}y + \frac{1}{\gamma}V(y,x) + h(y)},$$
and $V$ is some {\it prox-function}. (We refer the interested reader to~\cite{GhadimiLanZhang13} for the details).
The authors proposed a projected gradient algorithm and proved that it will take no more than $O({1}/{\epsilon^2})$ iterations to achieve
\begin{equation}\label{measure-lan}
\| P_{S}(x,\gamma) \|_2^2 \le \epsilon.
\end{equation}

In this paper we consider the following new notion of
stationarity for the nonconvex and nondifferentiable optimization model~\eqref{prob:nonconvex}:
\begin{definition}
We call $x$ to be a stationary point of~\eqref{prob:nonconvex} if the following condition holds:
\begin{equation}\label{formu:statn-cond}
\nabla f(x)^{\top}(y-x) + h(y) - h(x) \ge 0 \quad \forall y\in S.
\end{equation}
\end{definition}
In fact, if $x$ is a local minimizer of~\eqref{prob:nonconvex}, then it must satisfy~\eqref{formu:statn-cond}.
To see this, we shall use a contradiction argument. Suppose that there exists some $y \in S$ such that
$\nabla f(x)^{\top}(y-x) + h(y) - h(x) < 0$. Denote $d=y-x$. Then the directional derivative along direction $d$ at point $x$ satisfies
\begin{eqnarray*}
(f+h)^\prime(x;d)&=&\lim_{\alpha \downarrow 0}\frac{f(x+\alpha d)+h(x+\alpha d)-f(x)-h(x)}{\alpha}\\
& \le & \lim_{\alpha \downarrow 0}\frac{f(x+\alpha d)-f(x)}{\alpha}+ \lim_{\alpha \downarrow 0}\frac{(1-\alpha)h(x)+\alpha h(x+ d)-h(x)}{\alpha}\\
& = & \nabla f(x)^{\top}(y-x) + h(y) - h(x) < 0,
\end{eqnarray*}
where the inequality is due to the convexity of $h$.
Consequently, $x$ cannot be a local optimal solution of problem~\eqref{prob:nonconvex}.
Thus based on condition~\eqref{formu:statn-cond}, we consider the following definition of approximative stationary solution.

\begin{definition}
We call $x$ to be an $\epsilon$-stationary point of~\eqref{prob:nonconvex} if
\begin{equation}\label{formu:epsilon-KKT}
\psi_S(x):=\nabla f(x)^{\top}(y-x) + h(y) - h(x) \ge - \epsilon\quad \forall y\in S.
\end{equation}
\end{definition}
A similar condition for $L_2$-$L_p$ optimization problem was considered in~\cite{GeHeHe14}; the relationship between conditions~\eqref{formu:epsilon-KKT},~\eqref{measure-cartis} and~\eqref{measure-lan}
will be discussed in Section~\ref{Sec:Pre}.
To proceed, let us make the following technical assumption on the smooth part of the objective $f(x)$ throughout this paper.
\begin{assumption}\label{assump-p} There exists some $p > 1$ and $\lambda >0$ such that
\begin{equation}\label{ineq:p-power}
f(y) \le f(x) + \nabla f(x)^{\top}(y-x) + \frac{\lambda}{2}\|y - x\|^p_p, \, \, \, \forall x,y \in S.
\end{equation}
\end{assumption}

Some comments about Assumption~\ref{assump-p} are in order here.
First, notice that if $f(x)$ is concave, then \eqref{ineq:p-power} holds true for any $p>0$ and $\lambda>0$.
Second, if the gradient of $f$ satisfies
\begin{equation}\label{p-q-continuous}
\|\nabla f(x) - \nabla f(y)\|_q^q \le  M \|x - y\|_p^p\quad\forall\;x,y \in S
\end{equation}
for some $p,q>1$ and $\frac{1}{p} + \frac{1}{q} = 1$,
then the function itself also satisfies \eqref{ineq:p-power}. To see this, we let $z=y-x$ and $g(\alpha) = f(x+ \alpha z)$. It follows that
\begin{eqnarray*}
f(y)-f(x) &=& \int_{0}^{1}g^{\prime}(\alpha) z\, d\alpha = \int_{0}^{1}\nabla f(x+\alpha z)^{\top}z\, d\alpha\\
&\le&\int_{0}^{1}\nabla f(x)^{\top}z\,d\alpha + \left|\int_{0}^{1}( \nabla f(x+\alpha z)-\nabla f(x) )^{\top}z \,d\alpha \right| \\
&\le& \int_{0}^{1}\nabla f(x)^{\top}z\, d\alpha + \int_{0}^{1}\| \nabla f(x+\alpha z)-\nabla f(x)\|_q \|z\|_p\, d\alpha\\
&\le& \nabla f(x)^{\top}z + M^{1/q} \|z\|_p^{1+\frac{p}{q}} \int_{0}^{1} \alpha^{\frac{p}{q}} \, d\alpha\\
&=& \nabla f(x)^{\top}z + \frac{M^{1/q}}{p} \|z\|_p^p,
\end{eqnarray*}
where the last equality follows from $\frac{1}{p} + \frac{1}{q} = 1$. Thus, the function with Lipschitz continuous gradient
automatically satisfies inequality~\eqref{ineq:p-power} for $p=q=2$.
In fact, condition~\eqref{p-q-continuous} reflects the degree of the H\"olderian continuity of $\nabla f$, which was also considered in~\cite{Devo-Fran-Nesterov-2013} to construct an inexact first order oracle.
Finally, we remark that
the $p$-th powered $p$-norm function:
\begin{equation}\label{multivar-poly}
f(x)=\sum_{i=1}^{n} x_i^p,\;\forall\; 1< p \le 2,
\end{equation}
on $\RR^n_+$ also satisfies \eqref{p-q-continuous}.
We observe that
the function is separable with respect to all $x_i$, and so it suffices to show that there exists some $\lambda$ such that:
\begin{equation}\label{univar-poly}
v^p \le u^{p} + (u^p)^\prime(v - u) + \frac{\lambda}{2} |v-u|^p  = u^{p} + p\, u^{p-1}(v - u) + \frac{\lambda}{2} |v-u|^p,
\end{equation}
when $1 < p \le 2$. If $u=0$, then the inequality trivially holds for any $\lambda \ge 2$; otherwise we can divide both sides by $|u|^p$ and aim to prove an equivalent formulation:
\begin{equation}\label{one-parameter}
k^p \le 1 + p(k-1) + \frac{\lambda}{2} |k-1|^p,
\end{equation}
where $k=v/u$. To this end, define
$$
g(k):=\left\{\begin{array}{cl}0, & \mbox{if}\;k = 1\\
\frac{k^p - 1 - p(k-1)}{|k-1|^p}, & \mbox{otherwise.}\end{array}\right.
$$
Observe that $\lim\limits_{k \to +\infty}g(k) 
=1$, and from L'Hospital's rule
$$
\lim\limits_{k \to 1}g(k) =\left\{\begin{array}{cl}0, & \mbox{if}\;1< p < 2 \\
1, & \mbox{if}\;p = 2, \end{array}\right.
$$
and so $g(k)$ is upper bounded on $\RR$ and there exits some $\hat{\lambda}$ such that~\eqref{one-parameter} holds. Finally by letting $\lambda = \max\{2,\hat{\lambda}\}$, the inequality~\eqref{univar-poly} follows.

In this paper we shall propose two algorithms for solving problem~\eqref{prob:nonconvex}, both achieving an $O({1}/{\epsilon^q})$ iteration complexity, where $\frac{1}{p} + \frac{1}{q} = 1$ and $p$ is the parameter in~\eqref{ineq:p-power}. As a result, a larger value of $p$ leads to a smaller value of $q$, hence a better iteration bound for the algorithm.
In other words, this result shows that the ``smoothness'' of the function will be reflected in the speed of the convergence. In particular, when $p=2$, we get $O({1}/{\epsilon^q}) = O({1}/{\epsilon^2})$, which is consistent with the result of Cartis et al.~\cite{CartisGouldToint13a}. Another extreme case is when $f(x)$ is concave, and in this case the complexity can be reduced to $O({1}/{\epsilon})$; we shall elaborate more on this point later.

The algorithms to be proposed in this paper use only the first-order information of $f$. When $p=2$ and $h$ does not appear, {\bf Algorithm 2} in this paper is simply the gradient projection algorithm. When $p=2$ and $f$ is {\em convex}, {\bf Algorithm 2} coincides with the so-called ISTA (iterative shrinkage-thresholding algorithm (\cite{BT09}).
Similarly, if $h$ does not appear, 
then {\bf Algorithm 1} coincides with the conditional gradient (CG) method, where the subproblem to be solved in each iteration involves a linear objective function. Note that the conditional gradient method was originally proposed by Frank and Wolfe~\cite{FrankWolfe56}, and recently has regained some research attention primarily due to the fact that the linear subproblem is easier to solve in the context of large scale optimization. To the best of our knowledge, the iteration complexity analysis for the CG method had only been established for convex optimization~\cite{FreundGrigas13,Lan13}. In other words, the current paper presents for the first time an iteration complexity bound for the CG method in the context of nonconvex optimization. A recent work related to the current paper is~\cite{LMDZ14}, which proposes a smoothing SQP method to solve \eqref{prob:nonconvex}. The key differences are: (1) In \cite{LMDZ14}, $f(x)$ is assumed to be in the form of $\|(Ax-b)_+\|_q^q$ where $0<q<1$, and the gradient of $h(x)$ is assumed to be Lipschitz continuous and $h$ may also be non-convex; (2) In \cite{LMDZ14} the constraint set $S$ is assumed to be polyhedral; (3) In \cite{LMDZ14} a convex quadratic program is solved at each step as a subroutine. As we can see, the basic assumptions on the problem setting as well as the subroutines applied are all very different. The results are also fundamentally different. In \cite{LMDZ14}, a different notion of $\epsilon$-KKT condition is introduced, and a smoothing SQP method is shown to reach an $\epsilon$-KKT point in no more than $O(\epsilon^{q-4})$ iterations. In the current paper, the $\epsilon$-stationarity condition is based on a variational inequality. Though $h(x)$ is assumed to be convex in our context, the CG subroutine may be much easier to solve.
In this paper, we also extend our studies to stochastic nonconvex optimization, for which a combination of randomized sampling method and the first-order approximation approach is proposed. We also consider the case where the nonsmooth part is concave and show that the CG method achieves similar complexity bound by properly incorporating some randomized smoothing scheme. Besides, we show that our approach can be modified to handle multi-block nonconvex optimization which covers a great variety of applications.

This paper is organized as follows. In Section~\ref{Sec:Pre} we introduce a new notion of $\epsilon$-stationary solution and discuss its relationships to those proposed by Cartis et al.\ and Ghadimi et al.\/.  
In Section~\ref{Sec:de-alg}, we present two algorithms for problem~\eqref{prob:nonconvex} and analyze their iteration complexity bounds. We then develop a stochastic algorithm and a randomized smoothing algorithm in Section~\ref{Sec:sto-alg}.
In Section~\ref{Sec:mul-alg}, the solution methods are extended to solve a nonconvex multi-block optimization model.
Finally, we present our numerical experiments in Section~\ref{numerical}. 

\section{The $\epsilon$-Stationarity Condition} \label{Sec:Pre}

\subsection{Relationship with Existing Results} 
In the last section, we introduced two quality measures for an approximate solution, denoted by $\chi_S(x)$ and $\|P_{S}(x,\gamma) \|_2^2$ respectively. We also  proposed our new quality measure $\psi_S(x)$ (formula~\eqref{formu:epsilon-KKT}). Obviously, these three measures are different, but they are related. To be precise, their relationship is summarized in the following proposition.

\begin{proposition}\label{prop:relation} 
(i) If $h(x)\equiv 0$ and $\psi_S(x) \ge -\epsilon$, then $\chi_S(x) \le \epsilon$;\\
(ii) Suppose the prox-function $V(y,x)=\|y-x\|_2^2/2$, then $\psi_S(x) \ge -\epsilon$ implies  $\|P_{S}(x,\gamma) \|_2^2 \le \frac{\epsilon}{\gamma} $. Conversely if we further assume the gradient function $\nabla f(x)$ is continuous,
then $\|P_{S}(x,\gamma) \|_2^2 \le \epsilon$ implies
$$\psi_S(x) \ge - (\gamma\, \tau + \gamma\, \varsigma + \mbox{\rm diam}_2(S))\sqrt{\epsilon},$$
where $\tau = \max_{x \in S} \|\nabla f(x)\|_2$, $\varsigma = \max_{x \in S}\min_{z \in \partial h(x)}\| z \|_2$ and $\mbox{\rm diam}_2(S)$ is defined in~\eqref{radius}.
\end{proposition}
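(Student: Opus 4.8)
For part (i), the plan is to unwind the definitions. When $h \equiv 0$, the stationarity gap becomes $\psi_S(x) = \min_{y \in S} \nabla f(x)^\top (y-x)$, while $\chi_S(x) = |\min_{x+d \in S,\, \|d\|_2 \le 1} \nabla f(x)^\top d|$. Since $x \in S$, the choice $d = 0$ is feasible for the latter minimization, so the minimum is $\le 0$ and thus $\chi_S(x) = -\min_{x+d \in S,\, \|d\|_2 \le 1} \nabla f(x)^\top d$. The key observation is that any feasible $d$ with $\|d\|_2 \le 1$ gives a point $y = x+d \in S$, so $\nabla f(x)^\top d = \nabla f(x)^\top(y-x) \ge \psi_S(x) \ge -\epsilon$; taking the minimum over such $d$ yields $-\chi_S(x) \ge -\epsilon$, i.e. $\chi_S(x) \le \epsilon$. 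I should double-check whether the constraint $\|d\|_2 \le 1$ versus ranging over all of $S$ causes an issue, but since we only need a lower bound on $\nabla f(x)^\top d$ over a \emph{subset} of $\{y - x : y \in S\}$, this direction is immediate.

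For the first half of part (ii), with $V(y,x) = \|y-x\|_2^2/2$, the point $x^+ = \arg\min_{y \in S}\{\nabla f(x)^\top y + \frac{1}{2\gamma}\|y-x\|_2^2 + h(y)\}$ is the minimizer of a strongly convex function over the convex set $S$, so it satisfies the first-order optimality condition: for all $y \in S$,
\[
\nabla f(x)^\top(y - x^+) + \frac{1}{\gamma}(x^+ - x)^\top(y - x^+) + h(y) - h(x^+) \ge 0.
\]
The plan is to substitute the definition $P_S(x,\gamma) = \frac{1}{\gamma}(x - x^+)$, rearrange, and combine with the hypothesis $\psi_S(x) \ge -\epsilon$ applied at $y = x^+$, namely $\nabla f(x)^\top(x^+ - x) + h(x^+) - h(x) \ge -\epsilon$. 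Adding these two inequalities (the second with roles arranged to telescope the $h$ and $\nabla f$ terms through $x$, $x^+$) should leave $-\gamma \|P_S(x,\gamma)\|_2^2 \ge -\epsilon$, i.e. $\|P_S(x,\gamma)\|_2^2 \le \epsilon/\gamma$. The routine part here is the bookkeeping of which point to plug into which variational inequality.

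For the converse in part (ii), assume $\|P_S(x,\gamma)\|_2^2 \le \epsilon$, so $\|x - x^+\|_2 \le \gamma\sqrt{\epsilon}$. The plan is to bound $\psi_S(x) = \min_{y\in S}[\nabla f(x)^\top(y-x) + h(y) - h(x)]$ from below by inserting $x^+$ as an intermediate point: write $\nabla f(x)^\top(y-x) + h(y) - h(x) = [\nabla f(x)^\top(y - x^+) + h(y) - h(x^+)] + [\nabla f(x)^\top(x^+ - x) + h(x^+) - h(x)]$. The first bracket is $\ge \frac{1}{\gamma}(x - x^+)^\top(y - x^+) = P_S(x,\gamma)^\top(y-x^+)$ by the optimality condition for $x^+$, which is $\ge -\|P_S(x,\gamma)\|_2 \cdot \mathrm{diam}_2(S) \ge -\sqrt{\epsilon}\,\mathrm{diam}_2(S)$ by Cauchy--Schwarz (here is where $\|P_S\|_2 \le \sqrt\epsilon$ and the diameter enter). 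For the second bracket, bound $|\nabla f(x)^\top(x^+-x)| \le \tau \|x^+ - x\|_2 \le \tau\gamma\sqrt\epsilon$, and for $h(x^+) - h(x)$ use convexity: pick $z \in \partial h(x)$ with $\|z\|_2 = \min_{z\in\partial h(x)}\|z\|_2 \le \varsigma$, so $h(x^+) - h(x) \ge z^\top(x^+ - x) \ge -\varsigma\|x^+ - x\|_2 \ge -\varsigma\gamma\sqrt\epsilon$. Summing the three lower bounds gives $\psi_S(x) \ge -(\gamma\tau + \gamma\varsigma + \mathrm{diam}_2(S))\sqrt\epsilon$, as claimed. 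The main obstacle — and the reason continuity of $\nabla f$ is invoked — is ensuring $x^+$ is well-defined and that $\partial h(x)$ is nonempty with the relevant subgradient bound finite; continuity of $\nabla f$ on the compact set $S$ gives $\tau < \infty$, and the remaining finiteness claims for $\varsigma$ should be folded in as a standing regularity assumption on $h$. I expect the delicate point to be stating cleanly why $\varsigma$ is finite (dom $h \supseteq S$ with $h$ convex and $x$ in the relative interior, or simply taking it as given from the problem data).
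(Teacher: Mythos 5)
Your proposal is correct and follows essentially the same route as the paper's proof: part (i) by unwinding the definition of $\chi_S$, the forward direction of (ii) by plugging $y=x$ (equivalently $y=x^+$ in $\psi_S$) into the optimality condition for $x^+$, and the converse by splitting $\nabla f(x)^{\top}(y-x)+h(y)-h(x)$ through the intermediate point $x^+$ and bounding the three resulting terms by $\mbox{\rm diam}_2(S)\sqrt{\epsilon}$, $\gamma\tau\sqrt{\epsilon}$, and $\gamma\varsigma\sqrt{\epsilon}$ exactly as the paper does. The finiteness concerns you flag at the end are handled in the paper simply by compactness of $S$ together with continuity of $\nabla f$ and convexity of $h$, so no additional regularity assumption is needed.
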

\begin{proof} According to the definition,
\begin{eqnarray*}
&& \chi_S(x) \le \epsilon\\
& \Longleftrightarrow& \min\limits_{x+d \in S, \|d\|_2\le 1}\nabla f(x)^{\top}d \ge -\epsilon\\
& \Longleftrightarrow& \nabla f(x)^{\top}(y-x) \ge -\epsilon,\; \forall\; \|y-x\|_2 \le 1,\; y \in S.
\end{eqnarray*}
Therefore, (i) is readily implied by~\eqref{formu:epsilon-KKT} when $h(x) \equiv 0$. \\
Now, let us prove (ii). Since $V(y,x)=\|y-x\|_2^2/2$, one has
\begin{equation}\label{formu:general-projection}
\left(\nabla f(x)+\frac{1}{\gamma}(x^+ - x) +z \right)^{\top}(y - x^+) \ge 0\quad\forall\; y \in S,
\end{equation}
where $z \in \partial h(x^+)$. We can particularly choose $y = x$ and get
$$
\nabla f(x)^{\top}(x - x^+) + h(x) - h(x^+) \ge \left(\nabla f(x) +z \right)^{\top}(x - x^+) \ge \frac{1}{\gamma}\|x^+ - x\|_2^2.
$$
So if $\psi_S(x) \ge -\epsilon$ (i.e.~\eqref{formu:epsilon-KKT} holds) then we have $\|P_{S}(x,\gamma) \|_2^2 \le \frac{\epsilon}{\gamma} $. To show the other direction, note that $\varsigma$ is finite ($S$ is compact) and $h(x)$ is convex. Thus, for $x, x^+ \in S$ we can choose $w \in \partial h(x)$ such that
$$\varsigma\,  \|x - x^+\|_2 \ge w^{\top}(x - x^+) \ge h(x) - h(x^+).$$
This inequality together with~\eqref{formu:general-projection} implies that
\begin{eqnarray*}
&& \nabla f(x)^{\top}(y - x) + h(y) - h(x) + (\|\nabla f(x)\|_2  + \varsigma ) \|x - x^+\|_2\\
&\ge& \nabla f(x)^{\top}(y - x) + h(y) - h(x) + \nabla f(x)^{\top}(x - x^+) + h(x) - h(x^+) \\
&=& \nabla f(x)^{\top}(y - x^+) + h(y) - h(x^+) \\
&\ge& \left(\nabla f(x)+ z \right)^{\top}(y - x^+)\\
&\ge& - \frac{1}{\gamma}(x^+ - x)^{\top}(y - x^+)\\
&\ge& -\frac{1}{\gamma}\,\|y - x^+\|_2 \|x^+ - x\|_2
 \quad\forall\; y \in S,
\end{eqnarray*}
where $z \in \partial h(x^{+})$ and the second inequality follows from the convexity of $h(x)$. Furthermore, since $S$ is compact and $\nabla f(x)$ is continuous, by rearranging the terms in the above inequality, the final conclusion follows.
\end{proof}

Under the conditions of Proposition~\ref{prop:relation}, the relationship among these three measures for an approximate local optimal solution is depicted in Figure~\ref{graph}. According to this result, we see that condition~\eqref{formu:epsilon-KKT} is in some sense more general than~\eqref{measure-cartis} and~\eqref{measure-lan}. In the remainder of this paper, we shall refer the $\epsilon$-stationary condition to~\eqref{formu:epsilon-KKT}.

\begin{figure}[!htb]
\centering
  \includegraphics[scale=0.7]{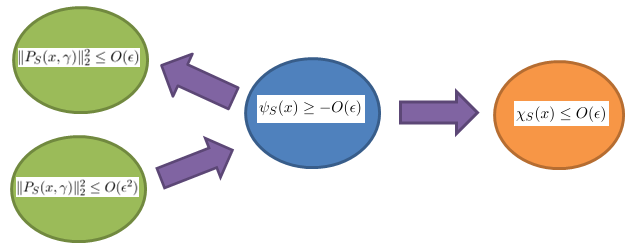}
  \caption{Relationship of the three different $\epsilon$-stationarity measures.}\label{graph}
\end{figure}

\subsection{Sufficient Conditions for $\epsilon$-Stationarity}
For a given point $z$, we define the following two functions, which will play a crucial role in our solution methods to be proposed later:
\begin{eqnarray}
L(x;z) &:=& f(z) + \nabla f(z)^{\top}(x-z) + h(x), \label{func:linear} \\
U(x;z) &:=& f(z) + \nabla f(z)^{\top}(x-z) + \frac{\lambda}{2}\|x-z \|^p_p + h(x). \label{func:upper-bound}
\end{eqnarray}
In fact,~\eqref{func:linear} is obtained by linearizing the smooth part of $\Phi$ and~\eqref{func:upper-bound} is an upper bound of $\Phi$ if Assumption~\ref{assump-p} holds.
These two functions lead to the following two convex optimization subroutines:
\begin{equation} \label{MLP}
\left\{ \begin{array}{ll} \min\limits_{x} & L(x;z) \\ \st & x \in S,
\end{array} \right.
\end{equation}
and
\begin{equation} \label{MUP}
\left\{ \begin{array}{ll} \min\limits_{x} & U(x;z) \\ \st & x \in S.
\end{array} \right.
\end{equation}

Denote ${z}_{L}$ and ${z}_{U}$ to be the minimizer of \eqref{MLP} 
and \eqref{MUP} 
respectively. The partial linearized improvement at point $z$ is defined by
$$
\triangle L_z :=  L(z;z) - L({z}_{L};z)=- \nabla f(z)^{\top}({z}_{L}-z) + h(z) - h(z_L).
$$
Similarly we define the partial $p$-powered improvement by
$$
\triangle U_{z} :=  U(z;z) - U({z}_{U};z)= - \nabla f(z)^{\top}({z}_{U}-z) - \frac{\lambda}{2}\|{z}_{U}-z \|^p_p+ h(z) - h(z_U).
$$
The following lemma, which is inspired by~\cite{GeHeHe14}, states that if the progress gained by solving \eqref{MLP} 
or
\eqref{MUP} 
is small, then we are already near a stationary point.
\begin{lemma}\label{kkt-criteria}Given $\epsilon \ge 0$, for any $z \in S$,\\
(i) if $\triangle L_z \le \epsilon$, then $z$ is an $\epsilon$-stationary point of~\eqref{prob:nonconvex};\\
(ii) if $\triangle U_{z} \le \frac{1}{2}\left(\frac{\epsilon}{\mbox{\rm diam}_p(S)\lambda^{1/p}}\right)^q$ with $\frac{1}{p} + \frac{1}{q}=1$, and $\epsilon \le \mbox{\rm diam}^p_p(S)\lambda$, then $z$ is an $\epsilon$-stationary point of~\eqref{prob:nonconvex}.
\end{lemma}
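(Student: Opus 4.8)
The plan is to prove both parts by comparing the relevant subroutine objective against $\psi_S(z)$, exploiting the fact that each subroutine minimizes a convex surrogate over $S$.

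For part (i), I would start from the definition of $z_L$ as the minimizer of the convex problem \eqref{MLP}. For any $y \in S$, the inequality $L(z_L;z) \le L(y;z)$ holds; writing this out gives $\nabla f(z)^\top(z_L - z) + h(z_L) \le \nabla f(z)^\top(y-z) + h(y)$. Rearranging, $\nabla f(z)^\top(y-z) + h(y) - h(z) \ge \nabla f(z)^\top(z_L - z) + h(z_L) - h(z) = -\triangle L_z \ge -\epsilon$. Since $y \in S$ was arbitrary, this is exactly $\psi_S(z) \ge -\epsilon$, i.e. $z$ is an $\epsilon$-stationary point. This part is essentially a one-line argument once the definitions are unwound.

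For part (ii), the idea is to relate $\triangle U_z$ to $\triangle L_z$ and then invoke part (i). First I would use optimality of $z_U$ for \eqref{MUP}: for any $y \in S$, $U(z_U;z) \le U(y;z)$. Taking $y = z_L$ and comparing with the definition of $z_L$ as minimizer of \eqref{MLP}, one gets that $\triangle L_z$ and $\triangle U_z$ differ only by the quadratic-type penalty term $\frac{\lambda}{2}\|z_U - z\|_p^p$, together with the fact that $z_L$ is a better point for $L(\cdot;z)$ than $z_U$; the upshot is a bound of the shape $\triangle L_z \le \triangle U_z + \frac{\lambda}{2}\|z_L - z\|_p^p$ or a similar inequality — more carefully, since $L(z_L;z) \le L(z_U;z) = U(z_U;z) - \frac{\lambda}{2}\|z_U - z\|_p^p$, we get $\triangle L_z \ge \triangle U_z$, which is the wrong direction, so instead I would bound $\triangle L_z$ by optimizing $U(\cdot;z)$ along the segment from $z$ toward $z_L$. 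Concretely, take $x_\alpha = z + \alpha(z_L - z)$ for $\alpha \in [0,1]$; then $\triangle U_z \ge U(z;z) - U(x_\alpha;z) = \alpha\,\triangle L_z^{\text{lin}} - \frac{\lambda}{2}\alpha^p\|z_L - z\|_p^p$ where $\triangle L_z^{\text{lin}} = -\nabla f(z)^\top(z_L-z) + h(z) - h(x_\alpha)/\alpha \ge \triangle L_z$ by convexity of $h$ (this last step needs the estimate $h(x_\alpha) \le (1-\alpha)h(z) + \alpha h(z_L)$). This yields $\triangle U_z \ge \alpha \triangle L_z - \frac{\lambda}{2}\alpha^p \operatorname{diam}_p^p(S)$. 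Then I would optimize the right-hand side over $\alpha \in [0,1]$: the unconstrained optimum is at $\alpha^* = \left(\triangle L_z / (\lambda \operatorname{diam}_p^p(S))\right)^{1/(p-1)} = \left(\triangle L_z / (\lambda \operatorname{diam}_p^p(S))\right)^{q/p}$, which lies in $[0,1]$ precisely when $\triangle L_z \le \lambda \operatorname{diam}_p^p(S)$. Substituting gives $\triangle U_z \ge c_p \,(\triangle L_z)^q / (\lambda \operatorname{diam}_p^p(S))^{q/p}$ for the appropriate constant $c_p = 1 - 1/p = 1/q$, hence $\triangle U_z \ge \frac{1}{q}\,(\triangle L_z)^q\,(\lambda\operatorname{diam}_p^p(S))^{-q/p}$; rearranged, $\triangle L_z \le \left(q\,\triangle U_z\right)^{1/q}\,(\lambda \operatorname{diam}_p^p(S))^{1/p}$. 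Plugging in the hypothesis $\triangle U_z \le \frac{1}{2}\left(\epsilon/(\operatorname{diam}_p(S)\lambda^{1/p})\right)^q$ — and using that the stated constant absorbs the factor involving $q$ (or more likely the paper's constant is exactly tuned so that $\triangle L_z \le \epsilon$ follows) — we conclude $\triangle L_z \le \epsilon$, and part (i) finishes the argument. The side condition $\epsilon \le \operatorname{diam}_p^p(S)\lambda$ is exactly what guarantees $\alpha^* \le 1$, so the constrained optimization over $\alpha$ is genuinely attained in the interior.

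The main obstacle I anticipate is getting the constants in part (ii) to match precisely — in particular tracking the $1/q$ versus $1/2$ factors and making sure the $\alpha$-optimization is carried out over $[0,1]$ rather than all of $\mathbb{R}$, which is where the hypothesis $\epsilon \le \operatorname{diam}_p^p(S)\lambda$ enters. The convexity step bounding $h$ along the segment, and the correct use of H\"older-type exponents ($p/q = p-1$, $q/p$, etc.), also require care but are routine. Everything else is bookkeeping with the definitions of $\triangle L_z$, $\triangle U_z$, and $\operatorname{diam}_p(S)$.
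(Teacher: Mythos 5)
Your part (i) is exactly the paper's argument: unwind the optimality of $z_L$ for \eqref{MLP} and rearrange. Your part (ii) also follows the paper's route in its essentials — evaluate $U(\cdot;z)$ along the segment $z+\alpha(z_L-z)$, use convexity of $h$ to get $\triangle U_z \ge \alpha\,\triangle L_z - \frac{\lambda}{2}\alpha^p\,\mbox{\rm diam}_p^p(S)$, and then pick $\alpha$ — but the way you propose to pick $\alpha$ contains a genuine flaw. You want to take the maximizer $\alpha^* = \bigl(\triangle L_z/(\lambda\,\mbox{\rm diam}_p^p(S))\bigr)^{1/(p-1)}$ and assert that the hypothesis $\epsilon \le \mbox{\rm diam}_p^p(S)\lambda$ guarantees $\alpha^*\in[0,1]$. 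It does not: $\alpha^*$ depends on $\triangle L_z$, not on $\epsilon$, and $\triangle L_z \le \lambda\,\mbox{\rm diam}_p^p(S)$ is not known a priori — indeed $\triangle L_z \le \epsilon$ is precisely what you are trying to prove, so invoking the hypothesis here is circular. (Two smaller issues: the true unconstrained maximizer of $\alpha t - \frac{\lambda D}{2}\alpha^p$ is $\bigl(2t/(\lambda D p)\bigr)^{1/(p-1)}$, not $\bigl(t/(\lambda D)\bigr)^{1/(p-1)}$, and the resulting constant is not $1/q$; you also leave the final constant-matching to hope.)

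The repair is exactly what the paper does: do not optimize over $\alpha$ at all, but substitute the fixed, $\epsilon$-dependent value $s = \bigl(\epsilon/(\mbox{\rm diam}_p^p(S)\lambda)\bigr)^{1/(p-1)}$, which lies in $[0,1]$ precisely because of the hypothesis $\epsilon \le \mbox{\rm diam}_p^p(S)\lambda$ — this is where that side condition actually enters. Rearranging $s\,\triangle L_z - \frac{\lambda}{2}s^p\,\mbox{\rm diam}_p^p(S) \le \triangle U_z \le \frac{1}{2}\bigl(\epsilon/(\mbox{\rm diam}_p(S)\lambda^{1/p})\bigr)^q$ and using $\frac{1}{p-1}=\frac{q}{p}$ and $q-\frac{1}{p-1}=1$ gives $\triangle L_z \le \frac{\epsilon}{2}+\frac{\epsilon}{2}=\epsilon$ with no slack in the constants, after which part (i) concludes. (Your choice of $\alpha^*$, had it been legitimate, would incidentally also give $\triangle L_z\le\epsilon$ exactly, since $\alpha^* t - \frac{\lambda D}{2}(\alpha^*)^p = \frac{1}{2}t^q(\lambda D)^{-q/p}$ with your $\alpha^*$; the only thing missing is a non-circular justification that this $\alpha^*$ is admissible, and the clean way to get it is to replace $\triangle L_z$ by $\epsilon$ in the exponent, i.e., the paper's choice.)
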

\begin{proof} Let us first consider (i). Since ${z}_{L}$ is optimal to \eqref{MLP},  we have 
$$
L(y;z) - L({z}_{L};z) = \nabla f(z)^{\top}(y-{z}_{L}) + h(y) - h(z_L) \ge 0,\;\forall\;y \in S.
$$
It follows that
\begin{eqnarray*}\label{ineq:lemma1}
&& \nabla f(z)^{\top}(y-z) + h(y) - h(z)\\
&=&\nabla f(z)^{\top}(y-{z}_{L}) +  h(y) - h(z_L) + \nabla f(z)^{\top}({z}_{L}-z) + h(z_L) - h(z) \\
&\ge& \nabla f(z)^{\top}({z}_{L}-z)+ h(z_L) - h(z)\quad \forall\; y \in S.
\end{eqnarray*}
Then by definition, $\triangle L_z \le \epsilon$ implies that
$$\nabla f(z)^{\top}(y-z)+ h(y) - h(z) \ge - \triangle L_z \ge - \epsilon.$$
To prove statement (ii), we consider the point $y = z + s({z}_{L} - z)$ with $0 \le s \le 1$. Clearly, by definition of partial $p$-powered improvement and convexity of $h$, it follows that
\begin{eqnarray*}
\frac{1}{2}\left(\frac{\epsilon}{\mbox{\rm diam}_p(S)\lambda^{1/p}}\right)^q
& \ge  & \triangle U_{z}  \ge   U(z;z) - U(y; z)\\
&=& \nabla f(z)^{\top}(z - y) - \frac{\lambda}{2}\|y-z\|^p_p + h(z) - h(y)\\
&\ge &-s \nabla f(z)^{\top}({z}_{L}-z) - \frac{\lambda}{2}s^p\|{z}_{L}-z\|^p_p + s \left(h(z)-h({z}_{L})\right).
\end{eqnarray*}
Letting $s = \left(\frac{\epsilon}{\mbox{\rm diam}^p_p(S)\lambda}\right)^{\frac{1}{p-1}}$ and rearranging the terms in the above inequality yield
\begin{eqnarray*}
 \triangle L_z  &=& -\nabla f(z)^{\top}({z}_{L}-z) + h(z) - h(z_L) \\
&\le& \frac{\lambda}{2}s^{p-1} \|{z}_{L}-z\|^p_p + \frac{1}{2s}\left(\frac{\epsilon}{\mbox{\rm diam}_p(S)\lambda^{1/p}}\right)^q \\
&=& \frac{\lambda}{2}\frac{\epsilon}{\mbox{\rm diam}^p_p(S)\lambda} \|{z}_{L}-z\|^p_p + \frac{1}{2}(\mbox{\rm diam}^p_p(S)\lambda)^{\frac{1}{p-1}-\frac{q}{p}}\epsilon^{q - \frac{1}{p-1}}.
\end{eqnarray*}
Since $\frac{1}{p} + \frac{1}{q}=1$, we have $\frac{1}{p-1}-\frac{q}{p}=0$ and $q - \frac{1}{p-1}=1$. These facts together with the definition of $\mbox{\rm diam}_p(S)$, as well as the inequality above, imply that
$$
\triangle L_z \le \frac{\epsilon}{2} + \frac{\epsilon}{2} =\epsilon,
$$
which, combined with statement (i), proves the desired result.
\end{proof}

\section{Algorithms and Their Iteration Complexities for Finding the $\epsilon$-Stationary Point}\label{Sec:de-alg}
We are now in a position to present our first algorithm for \eqref{prob:nonconvex}. In particular, at each iteration we find the search direction through optimizing a partially linearized function and then determine the step size by minimizing a simple one-dimensional function.

\begin{center}
\begin{tabular}{@{}llr@{}}\toprule
{\bf{Algorithm 1}} \\
\hline
{Let} $x^0 \in S$ {be given} and set $y^0 = x^0$.  \\
 \textbf{for} $k = 1,2,\cdots, N $, \textbf{do} \\
 \qquad $y^{k}=\arg\min_{y \in S} L(y;x^{k})$, and let $d^k = y^k - x^k$; \\
 \qquad $\alpha_k = \arg\min_{\alpha \in [0,1]}  \alpha\, \nabla f(x^{k})^{\top}d^k + \alpha^p\,\frac{\lambda}{2}\|d^k\|^p_p + (1 - \alpha)h(x^k) + \alpha\, h(y^k)$. \\
 \qquad Set $x^{k+1}=(1-\alpha_k)x^{k}+ \alpha_k y^{k}$. \\
\textbf{end for} \\
\hline
\end{tabular}
\end{center}

Note that in the absence of the nonsmooth part $h$, this algorithm is simply  CG (Conditional Gradient).
The computational complexity of {\bf Algorithm 1} to reach an $\epsilon$-stationary solution is as follows.
\begin{theorem}\label{theorm:CG} For any $0< \epsilon < \mbox{\rm diam}^p_p(S) \lambda $, {\bf Algorithm 1} finds an $\epsilon$-stationary point of~\eqref{prob:nonconvex} within $\left\lceil \frac{2(\Phi(x^{1}) - \Phi^*)(\mbox{\rm diam}^p_p(S)\lambda)^{q-1}}{\epsilon^q} \right\rceil$ steps, where $\frac{1}{p} + \frac{1}{q} =1$.
\end{theorem}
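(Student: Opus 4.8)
The plan is to track the decrease in the objective $\Phi$ per iteration and combine it with the stopping criterion from Lemma~\ref{kkt-criteria}(ii). First I would observe that the one-dimensional function minimized in the step-size rule of \textbf{Algorithm 1} is exactly $U(x^k + \alpha d^k; x^k) - f(x^k)$ restricted to $\alpha \in [0,1]$; indeed, $U(x^k+\alpha d^k;x^k) = f(x^k) + \alpha \nabla f(x^k)^\top d^k + \alpha^p \frac{\lambda}{2}\|d^k\|_p^p + h(x^k+\alpha d^k)$, and convexity of $h$ gives $h(x^k+\alpha d^k) \le (1-\alpha)h(x^k) + \alpha h(y^k)$, so minimizing the stated surrogate over $\alpha$ upper-bounds the minimization of $U(\cdot;x^k)$ along the segment. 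By Assumption~\ref{assump-p}, $\Phi(x^{k+1}) \le U(x^{k+1};x^k)$, hence $\Phi(x^k) - \Phi(x^{k+1}) \ge \triangle U_{x^k}^{(1)}$, where $\triangle U_{x^k}^{(1)}$ denotes the improvement of the surrogate at the optimal $\alpha_k$. The key quantitative step is then to lower-bound this per-step improvement in terms of $\triangle L_{x^k}$: plugging in the suboptimal choice $\alpha = \left(\frac{\triangle L_{x^k}}{\lambda \|d^k\|_p^p}\right)^{1/(p-1)}$ (capped at $1$), and noting $\alpha \nabla f(x^k)^\top d^k + \alpha(h(y^k)-h(x^k)) = -\alpha\, \triangle L_{x^k}$, one gets after the standard Young-type manipulation a bound of the form $\Phi(x^k)-\Phi(x^{k+1}) \ge c\, (\triangle L_{x^k})^{q} / (\lambda^{q-1}\|d^k\|_p^{p})^{?}$ — more precisely $\Phi(x^k)-\Phi(x^{k+1}) \ge \frac{1}{2}\left(\frac{\triangle L_{x^k}}{\mathrm{diam}_p(S)\,\lambda^{1/p}}\right)^q$, using $\|d^k\|_p \le \mathrm{diam}_p(S)$ and $q = p/(p-1)$, provided the uncapped $\alpha$ does not exceed $1$, which is guaranteed exactly when $\triangle L_{x^k} \le \mathrm{diam}_p^p(S)\lambda$.

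Next I would run the contradiction/telescoping argument. Suppose that none of $x^1,\dots,x^{N+1}$ is an $\epsilon$-stationary point. By Lemma~\ref{kkt-criteria}(i), this forces $\triangle L_{x^k} > \epsilon$ for every $k = 1,\dots,N$ (note $\triangle L_{x^k}$ is precisely the partial linearized improvement of Lemma~\ref{kkt-criteria} since $y^k = x_L$ for $z = x^k$). Since $\epsilon < \mathrm{diam}_p^p(S)\lambda$, the cap issue above is not triggered for the relevant range, and the per-step decrease satisfies
\[
\Phi(x^k) - \Phi(x^{k+1}) \ge \frac{1}{2}\left(\frac{\epsilon}{\mathrm{diam}_p(S)\,\lambda^{1/p}}\right)^q = \frac{\epsilon^q}{2\,(\mathrm{diam}_p^p(S)\lambda)^{q-1}},
\]
where the last equality uses $\mathrm{diam}_p(S)^q \lambda^{q/p} = (\mathrm{diam}_p^p(S))^{q/p}\lambda^{q/p} = (\mathrm{diam}_p^p(S)\lambda)^{q/p}$ and $q/p = q-1$. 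Telescoping from $k=1$ to $N$ gives
\[
\Phi(x^1) - \Phi^* \ge \Phi(x^1) - \Phi(x^{N+1}) \ge N\cdot \frac{\epsilon^q}{2\,(\mathrm{diam}_p^p(S)\lambda)^{q-1}}.
\]
Hence $N < \frac{2(\Phi(x^1)-\Phi^*)(\mathrm{diam}_p^p(S)\lambda)^{q-1}}{\epsilon^q}$, so once $N$ reaches the stated ceiling the assumption fails and some iterate is $\epsilon$-stationary.

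The main obstacle — and the only place requiring care — is the per-step decrease estimate: one must verify that the suboptimal step size $\alpha = (\triangle L_{x^k}/(\lambda\|d^k\|_p^p))^{1/(p-1)}$ lies in $[0,1]$ under the hypothesis $\epsilon < \mathrm{diam}_p^p(S)\lambda$, and that substituting it yields exactly the clean constant $\tfrac12$ rather than something $p$-dependent. This is the same Young's-inequality balancing already carried out inside the proof of Lemma~\ref{kkt-criteria}(ii), so I expect it to go through verbatim; in fact one may be able to invoke Lemma~\ref{kkt-criteria}(ii) directly by arguing that $\triangle U_{x^k}$ (true $p$-powered improvement) lower-bounds the per-step objective drop and then contraposing. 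A secondary subtlety is the off-by-one in indices ($\Phi(x^1)$ versus $\Phi(x^0)$, and running the count over $x^1,\dots,x^{N+1}$), which the ceiling in the statement absorbs; I would state the telescoping starting from $x^1$ to match the theorem exactly.
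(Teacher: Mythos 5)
Your overall strategy is the same as the paper's: exploit the optimality of $\alpha_k$ by comparing against a judiciously chosen suboptimal step size, convert the resulting per-iteration progress into a bound involving $\triangle L_k$, telescope, and invoke Lemma~\ref{kkt-criteria}(i). The only substantive difference is the comparison step size: the paper uses the fixed, $\epsilon$-dependent value $\alpha = \left(\frac{\epsilon}{\mbox{\rm diam}^p_p(S)\lambda}\right)^{1/(p-1)}$, which lies in $[0,1]$ automatically because $\epsilon \le \mbox{\rm diam}^p_p(S)\lambda$, and then runs an averaging argument over the $N$ iterations; you use the $k$-dependent balancing value $\alpha = \left(\frac{\triangle L_k}{\lambda\|d^k\|_p^p}\right)^{1/(p-1)}$ together with a contradiction argument. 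Both routes yield the identical constant.

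The one genuine flaw is your handling of the cap. The uncapped $\alpha$ satisfies $\alpha \le 1$ if and only if $\triangle L_k \le \lambda\|d^k\|_p^p$, and since $\|d^k\|_p \le \mbox{\rm diam}_p(S)$, the hypothesis $\triangle L_k \le \mbox{\rm diam}^p_p(S)\lambda$ (or $\epsilon < \mbox{\rm diam}^p_p(S)\lambda$) does \emph{not} rule out $\alpha > 1$: the inequality between $\|d^k\|_p$ and $\mbox{\rm diam}_p(S)$ points the wrong way, and when $\|d^k\|_p$ is small, $\triangle L_k$ can easily exceed $\lambda\|d^k\|_p^p$. So your assertion that ``the cap issue is not triggered'' is false as stated, and the per-step decrease bound is unproven in that regime. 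The gap is easily patched: when $\triangle L_k > \lambda\|d^k\|_p^p$, take $\alpha = 1$, which gives a decrease of at least $\triangle L_k - \frac{\lambda}{2}\|d^k\|_p^p > \frac{1}{2}\triangle L_k > \frac{\epsilon}{2} \ge \frac{\epsilon^q}{2(\mbox{\rm diam}^p_p(S)\lambda)^{q-1}}$, where the last step uses $\epsilon \le \mbox{\rm diam}^p_p(S)\lambda$ and $q>1$; alternatively, switch to the paper's fixed comparison step size, which avoids the case split altogether. With that one-line repair your argument is complete and delivers exactly the stated iteration bound.
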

\begin{proof} According to Assumption~\ref{assump-p}, it holds that
\begin{eqnarray}\label{formu-in-proof}
&&-\nabla f(x^{k})^{\top}(x^{k+1}-x^{k}) - \frac{\lambda}{2}\|x^{k+1} - x^{k}\|^p_p+ h(x^{k})-h(x^{k+1}) \nonumber \\
&\le& f(x^{k}) - f(x^{k+1}) + h(x^{k})-h(x^{k+1}) \nonumber \\
&=& \Phi(x^{k}) - \Phi(x^{k+1}) .
\end{eqnarray}
Note that $\frac{\epsilon}{\mbox{\rm diam}^p_p(S) \lambda } \le 1$ and $x^{k+1} - x^{k} = \alpha_k(y^{k}-x^{k})$.
For simplicity, denote $\triangle L_k  := \triangle L_{x^k}$.
By the optimality of $\alpha_k$, we have
\begin{eqnarray*}
&&\left(\frac{\epsilon}{\mbox{\rm diam}^p_p(S)\lambda}\right)^{\frac{1}{p-1}}\triangle L_k - \frac{1}{2\lambda^{1/(p-1)}} \left(\frac{\epsilon}{ \mbox{\rm diam}_p(S)}\right)^{\frac{p}{p-1}} \\
&\le&-\left(\frac{\epsilon}{\mbox{\rm diam}^p_p(S)\lambda}\right)^{\frac{1}{p-1}}\left( \nabla f(x^{k})^{\top}(y^{k}-x^{k})+h(y^{k}) - h(x^{k}) \right)- \frac{\lambda}{2} \frac{\|y^{k} - x^{k}\|^p_p}{\mbox{\rm diam}^p_p(S)}\left(\frac{\epsilon}{\lambda \mbox{\rm diam}_p(S)}\right)^{\frac{p}{p-1}}\\
&=&-\left(\frac{\epsilon}{\mbox{\rm diam}^p_p(S)\lambda}\right)^{\frac{1}{p-1}}\left( \nabla f(x^{k})^{\top}(y^{k}-x^{k})+h(y^{k}) - h(x^{k}) \right)- \frac{\lambda}{2} \left(\frac{\epsilon}{\mbox{\rm diam}^p_p(S)\lambda}\right)^{\frac{p}{p-1}} \|y^{k} - x^{k}\|^p_p \\
&\le&-\alpha_k \left( \nabla f(x^{k})^{\top}(y^{k}-x^{k})+h(y^{k}) - h(x^{k}) \right) - \frac{\lambda \alpha_k^p}{2}\|y^{k} - x^{k}\|^p_p\\
&=& -\nabla f(x^{k})^{\top}(\alpha_k (y^{k}-x^{k})) + h(x^{k})-(1 - \alpha_k)h(x^k) - \alpha_k h(y^k) - \frac{\lambda }{2}\|\alpha_k(y^{k} - x^{k})\|^p_p \\
&\le & -\nabla f(x^{k})^{\top}(x^{k+1}-x^{k}) + h(x^{k})-h(x^{k+1}) - \frac{\lambda}{2}\|x^{k+1} - x^{k}\|^p_p,
\end{eqnarray*}
where the last inequality is due to the convexity of function $h(\cdot)$. Combining this formula with~\eqref{formu-in-proof} leads to
$$
\left(\frac{\epsilon}{\mbox{\rm diam}^p_p(S)\lambda}\right)^{\frac{1}{p-1}} \triangle L_k \le \Phi(x^{k}) - \Phi(x^{k+1}) + \frac{1}{2\lambda^{1/(p-1)}} \left(\frac{\epsilon}{ \mbox{\rm diam}_p(S)}\right)^{\frac{p}{p-1}}.
$$
Dividing both sides by $\left(\frac{\epsilon}{\mbox{\rm diam}^p_p(S)\lambda}\right)^{\frac{1}{p-1}}$, one has
$$
\triangle L_k \le \left(\frac{\epsilon}{\mbox{\rm diam}^p_p(S)\lambda}\right)^{-\frac{1}{p-1}}\left(\Phi(x^{k}) - \Phi(x^{k+1})\right) + \frac{\epsilon}{2}.
$$
Summing up the above inequalities for $k=1,\ldots,N$ yields
\begin{eqnarray*}
N \min_{k\in\{1,\ldots,N\}}\triangle L_k  &\le&  \sum_{k=1}^{N}\triangle L_k  \le \left(\frac{\epsilon}{\mbox{\rm diam}^p_p(S)\lambda}\right)^{-\frac{1}{p-1}} \left(\Phi(x^{1}) - \Phi(x^{N+1})\right)+ \frac{\epsilon}{2}N \\
&\le& \left(\frac{\epsilon}{\mbox{\rm diam}^p_p(S)\lambda}\right)^{-\frac{1}{p-1}} \left(\Phi(x^{1}) - \Phi^*\right)+ \frac{\epsilon}{2}N .
\end{eqnarray*}
Observe that $\frac{1}{p} + \frac{1}{q}=1$ leads to $q - 1 - \frac{1}{p-1}=0$.
Dividing the above inequalities by $N = \left\lceil \frac{2(\Phi(x^{1}) - \Phi^*)(\mbox{\rm diam}^p_p(S)\lambda)^{q-1}}{\epsilon^q} \right\rceil$, we arrive at the conclusion that there must exist some $\tilde{k} \le N$ such that
$\triangle L_{\tilde{k}} \le \epsilon$,
which combined with Lemma~\ref{kkt-criteria} further implies that $x^{\tilde{k}}$ is an $\epsilon$-stationary point of~\eqref{prob:nonconvex}.
\end{proof}

From Theorem~\ref{theorm:CG}, we can see that the larger value of $p$ implies fewer iteration numbers required by the algorithm.
Since a concave function satisfies {Assumption}~\ref{assump-p} for any $p \ge 1$, in this case a better 
complexity bound is guaranteed.
\begin{corollary}\label{coro:concave}
If $f$ is a concave function, then performing {\bf Algorithm 1} with $\alpha_k = 1$ for all $k$ will reach an $\epsilon$-stationary point of~\eqref{prob:nonconvex} within $\left\lceil \frac{\Phi(x^{1}) - \Phi^*}{\epsilon} \right\rceil$ steps.
\end{corollary}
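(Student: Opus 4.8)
The plan is to re-run the telescoping argument from the proof of Theorem~\ref{theorm:CG}, but to use the concavity of $f$ in order to get rid of the $\frac{\lambda}{2}\|d^k\|_p^p$ term completely; this is precisely what makes the $1/\epsilon^q$ bound collapse to $1/\epsilon$. First I would note that taking $\alpha_k=1$ makes the update $x^{k+1}=y^k=\arg\min_{y\in S}L(y;x^k)$, so in the notation preceding Lemma~\ref{kkt-criteria} we have $x^{k+1}=(x^k)_L$ and hence, directly from the definition of $\triangle L$,
\[
\triangle L_{x^k} \;=\; -\nabla f(x^k)^{\top}(x^{k+1}-x^k) + h(x^k) - h(x^{k+1}).
\]

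Next, since $f$ is concave we have $f(x^{k+1}) \le f(x^k) + \nabla f(x^k)^{\top}(x^{k+1}-x^k)$, i.e. $-\nabla f(x^k)^{\top}(x^{k+1}-x^k) \le f(x^k)-f(x^{k+1})$. Substituting this into the identity above gives
\[
\triangle L_{x^k} \;\le\; \bigl(f(x^k)-f(x^{k+1})\bigr) + \bigl(h(x^k)-h(x^{k+1})\bigr) \;=\; \Phi(x^k)-\Phi(x^{k+1}).
\]
Summing over $k=1,\dots,N$ and telescoping, and using that $x^{N+1}\in S$ so $\Phi(x^{N+1})\ge \Phi^*$ (in fact $\triangle L_{x^k}\ge 0$ by optimality of $x^{k+1}$, so $\Phi(x^k)$ is nonincreasing), yields
\[
N\min_{1\le k\le N}\triangle L_{x^k} \;\le\; \sum_{k=1}^{N}\triangle L_{x^k} \;\le\; \Phi(x^1)-\Phi(x^{N+1}) \;\le\; \Phi(x^1)-\Phi^*.
\]
Choosing $N=\left\lceil \frac{\Phi(x^1)-\Phi^*}{\epsilon}\right\rceil$ then forces $\triangle L_{x^{\tilde k}}\le\epsilon$ for some $\tilde k\le N$, and Lemma~\ref{kkt-criteria}(i) certifies that $x^{\tilde k}$ is an $\epsilon$-stationary point of~\eqref{prob:nonconvex}.

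There is essentially no hard step here; the only point deserving a sentence of care is the admissibility of $\alpha_k=1$. One may either simply impose it, as the statement does, or observe that it is the minimizer of the one-dimensional subproblem in \textbf{Algorithm 1} in the limit $\lambda\downarrow 0$ — a legitimate regime because a concave $f$ satisfies Assumption~\ref{assump-p} for every $\lambda>0$ — using that $\nabla f(x^k)^{\top}d^k + h(y^k)-h(x^k) = L(y^k;x^k)-L(x^k;x^k)\le 0$ makes the part of the subproblem objective that is linear in $\alpha$ nonincreasing on $[0,1]$. Note that, apart from this remark, Assumption~\ref{assump-p} is not actually used in the argument.
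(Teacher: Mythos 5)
Your proof is correct and is exactly the argument the paper intends: the paper gives no explicit proof of this corollary, and your telescoping of $\triangle L_{x^k} \le \Phi(x^k)-\Phi(x^{k+1})$ via the concavity inequality $f(x^{k+1})\le f(x^k)+\nabla f(x^k)^{\top}(x^{k+1}-x^k)$ is the natural specialization of the proof of Theorem~\ref{theorm:CG} to the case $\alpha_k=1$, in which the $\frac{\lambda}{2}\|\cdot\|_p^p$ slack term (and hence the $\epsilon^q$ dependence) disappears. Your closing remark that $\alpha_k=1$ is simply imposed by the modified algorithm, and that Assumption~\ref{assump-p} is not otherwise needed, is also accurate.
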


To proceed, we present our second algorithm below, which is based on minimizing an upper bound of the original objective function at each iteration.
\begin{center}
\begin{tabular}{@{}llr@{}}\toprule
{\bf{Algorithm 2}}\\
\hline
{Let} $x^1 \in S$ {be given}  \\
 \textbf{for} $k = 1,2,\cdots, N $, \textbf{do}\\
 \qquad $x^{k+1}=\arg\min_{y \in S} U(y;x^{k})$.\\
\textbf{end for}\\
\hline
\end{tabular}
\end{center}

We remark that this algorithm still solves a separable convex subproblem since the function $U(y;x^{k})$ itself is convex when $p\ge 1$ and is separable with respect to $y_i$ for all $i$.
For simplicity, we denote $\triangle U_k := \triangle U_{x^k,p}$.
Below is the complexity result for {\bf Algorithm 2}.
\begin{theorem}\label{theorm:PPA} For any any $0< \epsilon < \mbox{\rm diam}^p_p(S) \lambda $, {\bf Algorithm 2} finds an $\epsilon$-stationary point of~\eqref{prob:nonconvex} within $\left\lceil \frac{2(\Phi(x^{1}) - \Phi^*)(\mbox{\rm diam}^p_p(S)\lambda)^{q-1}}{\epsilon^q} \right\rceil$ steps, where $\frac{1}{p} + \frac{1}{q} =1$.
\end{theorem}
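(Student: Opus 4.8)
The plan is to mirror the argument used for Theorem~\ref{theorm:CG}, but the analysis is actually simpler here, because {\bf Algorithm 2} minimizes the upper bounding model $U(\cdot\,;x^k)$ exactly and hence a one-step decrease of $\Phi$ comes for free, with no line-search bookkeeping. The first step is to establish the per-iteration inequality
\[
\triangle U_k \le \Phi(x^k) - \Phi(x^{k+1}).
\]
This follows from two observations. On one hand, applying Assumption~\ref{assump-p} with the pair $x = x^k$, $y = x^{k+1}$ and then adding $h(x^{k+1})$ to both sides gives $\Phi(x^{k+1}) \le f(x^k) + \nabla f(x^k)^{\top}(x^{k+1}-x^k) + \frac{\lambda}{2}\|x^{k+1}-x^k\|_p^p + h(x^{k+1}) = U(x^{k+1};x^k)$. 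On the other hand, since $x^{k+1}$ is the minimizer of $U(\cdot\,;x^k)$ over $S$ and $U(x^k;x^k) = \Phi(x^k)$, the definition of $\triangle U_k = \triangle U_{x^k}$ yields $U(x^{k+1};x^k) = \Phi(x^k) - \triangle U_k$. Chaining the two gives the claim.

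Next I would telescope. Summing the per-iteration inequality over $k = 1,\dots,N$ and using $\Phi(x^{N+1}) \ge \Phi^*$ gives $\sum_{k=1}^N \triangle U_k \le \Phi(x^1) - \Phi^*$, hence $\min_{1\le k\le N}\triangle U_k \le (\Phi(x^1)-\Phi^*)/N$. I then want this quantity to be at most $\frac12\bigl(\epsilon/(\mbox{\rm diam}_p(S)\lambda^{1/p})\bigr)^q$, i.e.\ the threshold appearing in Lemma~\ref{kkt-criteria}(ii). The only slightly delicate point is the exponent bookkeeping: since $\frac1p+\frac1q=1$ we have $q/p = q-1$ and $p(q-1)=q$, so that
\[
\bigl(\mbox{\rm diam}_p(S)\lambda^{1/p}\bigr)^q = \mbox{\rm diam}_p^q(S)\,\lambda^{q-1} = \bigl(\mbox{\rm diam}_p^p(S)\lambda\bigr)^{q-1}.
\]
Consequently the choice $N = \bigl\lceil 2(\Phi(x^1)-\Phi^*)(\mbox{\rm diam}_p^p(S)\lambda)^{q-1}/\epsilon^q \bigr\rceil$ forces $\min_{k}\triangle U_k \le \frac12\bigl(\epsilon/(\mbox{\rm diam}_p(S)\lambda^{1/p})\bigr)^q$, so there exists $\tilde k \le N$ attaining this bound.

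Finally, the standing hypothesis $0 < \epsilon < \mbox{\rm diam}_p^p(S)\lambda$ is exactly the second precondition of Lemma~\ref{kkt-criteria}(ii), so that lemma applies with $z = x^{\tilde k}$ and certifies that $x^{\tilde k}$ is an $\epsilon$-stationary point of~\eqref{prob:nonconvex}, which finishes the proof. I do not expect a real obstacle: the argument is a direct descent-plus-telescoping estimate, and the only things demanding care are verifying the displayed identity relating $(\mbox{\rm diam}_p(S)\lambda^{1/p})^q$ to $(\mbox{\rm diam}_p^p(S)\lambda)^{q-1}$ and confirming that both hypotheses of Lemma~\ref{kkt-criteria}(ii) are in force.
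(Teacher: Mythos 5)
Your proposal is correct and follows essentially the same route as the paper's proof: the per-iteration bound $\triangle U_k \le \Phi(x^k)-\Phi(x^{k+1})$ from Assumption~\ref{assump-p}, telescoping to bound $\min_k \triangle U_k$, and invoking Lemma~\ref{kkt-criteria}(ii) with the stated choice of $N$. Your explicit verification of the identity $\bigl(\mbox{\rm diam}_p(S)\lambda^{1/p}\bigr)^q = \bigl(\mbox{\rm diam}_p^p(S)\lambda\bigr)^{q-1}$ is a detail the paper leaves implicit, but the argument is the same.
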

\begin{proof} According to Assumption~\ref{assump-p}, it holds that
$$
f(x^{k+1}) \le f(x^{k}) + \nabla f(x^{k})^{\top}(x^{k+1}-x^{k}) + \frac{\lambda}{2}\|x^{k+1} - x^{k}\|^p_p.
$$
As a result,
\begin{eqnarray*}
\triangle U_k &=& -\nabla f(x^{k})^{\top}(x^{k+1}-x^{k}) - \frac{\lambda}{2}\|x^{k+1} - x^{k}\|^p_p + h(x^{k}) - h(x^{k+1})\\
 &\le & -f(x^{k+1}) + f(x^{k}) + h(x^{k}) - h(x^{k+1})\\
&=& \Phi(x^{k}) - \Phi(x^{k+1}).
\end{eqnarray*}
Summing up the above inequalities for $k=1,\ldots,N$ yields
$$
N \min_{k\in\{1,\ldots,N\}}\triangle U_k \le \sum_{k=1}^{N}\triangle U_k \le \Phi(x^{1}) - \Phi(x^{N+1}) \le \Phi(x^{1}) - \Phi^*.
$$
Thus, setting $N = \left\lceil \frac{2(\Phi(x^{1}) - \Phi^*)(\mbox{\rm diam}^p_p(S)\lambda)^{q-1}}{\epsilon^q} \right\rceil$, there must exist some $\tilde{k} \le N$ such that
$$
\triangle U_{\tilde{k}} \le \frac{1}{2(\mbox{\rm diam}^p_p(S)\lambda)^{q-1}}\epsilon^q=\frac{1}{2}\left(\frac{\epsilon}{\mbox{\rm diam}_p(S)\lambda^{1/p}} \right)^q.
$$
This inequality combined with statement (ii) in Lemma~\ref{kkt-criteria} implies that $x^{\tilde{k}}$ is an $\epsilon$-stationary point for~\eqref{prob:nonconvex}.
\end{proof}

\section{Iteration Complexity Bounds for Stochastic and Smoothing Approximation Methods}\label{Sec:sto-alg}
\subsection{Complexity for Stochastic Approximation}
In this subsection, we study the case where the exact gradient of $f(x)$ in problem~\eqref{prob:nonconvex} is not available. Instead, we assume that a noise estimation for the gradient of $f$ can be obtained by resorting to the so-called stochastic first-order oracle $\mathcal{SFO}$. In particular, for point $x^k \in S$ at the $k$-th iteration, $\mathcal{SFO}$ would return a stochastic gradient $G(x^k, \xi^k)$, where $\xi^k$ is a random variable satisfying
\begin{eqnarray}
& &\ex[G(x^k, \xi^k)] = \nabla f(x^k), \label{stochastic-assump-1}\\
& &\ex[\| G(x^k, \xi^k) - \nabla f(x^k) \|_q^q ] \le \sigma^q, \label{stochastic-assump-2}
\end{eqnarray}
for some constant $\sigma > 0$.

The method of randomized sampling for stochastic programming can be traced back to the seminal paper of Robbins and Monro~\cite{RobbinsMonro1951} (1951).
Computational complexity for convex optimization was first studied in~\cite{NemirovskiYudin83}; similar results were subsequently established for convex stochastic optimization~\cite{NemirovskiJuditskyLanShapiro09,Lan12}. Recently, complexity analysis for stochastic approximation has been successfully extended to several nonconvex models as well; cf.~\cite{GhadimiLan13,GhadimiLanZhang13,WangMaYuan13}.

Inspired by~\cite{GhadimiLanZhang13}, we propose below a mini-batch stochastic algorithm for the stochastic version of~\eqref{prob:nonconvex}:

\begin{center}
\begin{tabular}{@{}llr@{}}\toprule
{\bf{Algorithm 3}}\\
\hline
{Let} $x^1 \in S$ {be given}  \\
 \textbf{for} $k = 1,2,\cdots, N $, \textbf{do}\\
 \qquad Call the $\mathcal{SFO}$ $m_k$ times to obtain $G(x^k,\xi^{k,i})$, $i=1,\ldots,m_k$; \\
 \qquad set $G_k = \frac{1}{m_k}\sum_{i=1}^{m_k}G(x^k,\xi^{k,i})$, and compute\\
 \qquad $x^{k+1}=\arg\min\limits_{x \in S}\tilde{U}(x;G_k)$, where
  $\tilde{U}(x;G_k):=G_k^{\top}(x-x^k) + \frac{\lambda}{2}\|x-x^k \|^p_p + h(x)$.\\
\textbf{end for}\\
\hline
\end{tabular}
\end{center}

Before discussing the computational complexity of {\bf Algorithm 3}, we shall note the following two technical lemmas.

\begin{lemma}\label{lemma:p-norm} For any $a,b,c \in \RR$ and 
$p \ge 2$ we have
\begin{equation} \label{ineq1}
(\sign(a - c)|a-c|^{p-1}- \sign(b - c)|b-c|^{p-1})(a-b) \ge (1/2)^{p-2}|a-b|^{p}.
\end{equation}
\end{lemma}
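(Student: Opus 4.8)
The plan is to reduce the inequality \eqref{ineq1} to a one-dimensional statement about the scalar function $\phi(t) = \sign(t)|t|^{p-1}$, which is the derivative of $\frac{1}{p}|t|^p$ and hence is monotone increasing for $p \ge 2$. First I would shift coordinates by setting $u = a - c$ and $w = b - c$, so that $a - b = u - w$ and the claim becomes
\[
\bigl(\phi(u) - \phi(w)\bigr)(u - w) \ge (1/2)^{p-2}\,|u-w|^p \qquad \forall\, u,w \in \RR.
\]
This removes $c$ entirely and makes the problem homogeneous of degree $p$ in $(u,w)$, so by scaling I may assume $|u-w| = 1$, or alternatively argue directly.

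The key analytic input is the convexity estimate for $F(t) = \frac{1}{p}|t|^p$: since $F$ is convex with $F' = \phi$, we have $\bigl(\phi(u)-\phi(w)\bigr)(u-w) = (F'(u)-F'(w))(u-w) \ge 0$ always, but I need the quantitative lower bound. Here I would invoke the standard fact that for $p \ge 2$ the function $F(t) = |t|^p/p$ is \emph{strongly convex of order $p$} with the sharp constant $(1/2)^{p-2}$; concretely, one proves the pointwise inequality
\[
\Bigl|\tfrac{u+w}{2}\Bigr|^p + \tfrac{1}{2^{p-1}}\Bigl|\tfrac{u-w}{2}\Bigr|^p \;\le\; \tfrac{1}{2}|u|^p + \tfrac{1}{2}|w|^p
\]
(a classical Clarkson-type inequality), and then combines it with the convexity/monotonicity of $\phi$ via the identity $(\phi(u)-\phi(w))(u-w) \ge F(u) + F(w) - 2F(\frac{u+w}{2})$ — the latter following because $\phi$ is increasing so $\int_{(u+w)/2}^{u}(\phi(s)-\phi(\frac{u+w}{2}))\,ds + \int_{w}^{(u+w)/2}(\phi(\frac{u+w}{2})-\phi(s))\,ds \le \int$ of the original bracket over $[w,u]$. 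Alternatively, and perhaps more cleanly, I would just handle the two cases $uw \ge 0$ and $uw < 0$ separately: when $u,w$ have the same sign the inequality reduces (by symmetry, WLOG $u \ge w \ge 0$) to showing $(u^{p-1}-w^{p-1})(u-w) \ge (1/2)^{p-2}(u-w)^p$, which after dividing by $(u-w)^p$ and setting $r = u/w \ge 1$ is a single-variable inequality in $r$ checkable by elementary calculus; when $uw < 0$, say $u > 0 > w$, both $\phi(u) = u^{p-1} > 0$ and $-\phi(w) = |w|^{p-1} > 0$, and one shows $(u^{p-1} + |w|^{p-1})(u + |w|) \ge (1/2)^{p-2}(u+|w|)^p$, i.e.\ $u^{p-1}+|w|^{p-1} \ge (1/2)^{p-2}(u+|w|)^{p-1}$, which is exactly the power-mean / convexity inequality $\frac{x^{p-1}+y^{p-1}}{2} \ge \bigl(\frac{x+y}{2}\bigr)^{p-1}$ for $p-1 \ge 1$.

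I expect the main obstacle to be verifying the sharp constant in the same-sign case — that is, checking that $h(r) := \frac{r^{p-1}-1}{(r-1)^{p-1}}$ (for $r > 1$) has infimum exactly $(1/2)^{p-2}$, attained as $r \to 1^+$. This is a routine but slightly delicate one-variable computation: one shows $h$ is decreasing on $(1,\infty)$ and computes $\lim_{r\to 1^+} h(r)$, which is $+\infty$ when $p < 2$ (irrelevant here since we assume $p \ge 2$) and equals... actually the relevant limit is at $r\to\infty$ giving $0$ is wrong — care is needed about which endpoint gives the infimum, so I would double-check monotonicity by differentiating $\log h$. Everything else — the case split, the power-mean inequality for the opposite-sign case, the homogeneity reduction — is straightforward. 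Once the scalar inequality is established with the correct constant, the lemma follows immediately by the substitution $u = a-c$, $w = b-c$.
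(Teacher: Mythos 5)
Your second route --- substituting $u=a-c$, $w=b-c$ and splitting into the same-sign and opposite-sign cases, with the power-mean inequality $\frac{x^{p-1}+y^{p-1}}{2}\ge\bigl(\frac{x+y}{2}\bigr)^{p-1}$ handling opposite signs and a one-variable superadditivity bound handling same signs --- is exactly the paper's argument (the paper phrases the case split as orderings of $a,b,c$). Your worry about the constant in the same-sign case is unfounded: there $u^{p-1}-w^{p-1}\ge (u-w)^{p-1}$ by superadditivity of $t\mapsto t^{p-1}$ on $\RR_+$ (i.e.\ $h(r)\ge 1\ge (1/2)^{p-2}$, with infimum $1$ attained as $r\to\infty$), so the factor $(1/2)^{p-2}$ is only binding in the opposite-sign case, at $u=-w$.
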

\begin{proof}
First of all, we observe that $x^{p-1}$ is a convex function for $x\in \RR_+$, and so for any $x,y\in \RR_+$ we have
\[
\left( \frac{x+y}{2}\right)^{p-1} \le \frac{1}{2}\left( x^{p-1}+y^{p-1} \right)
\]
implying that $x^{p-1}+y^{p-1} \ge \frac{1}{2^{p-2}}(x+y)^{p-1}$. Also, we have $(x+y)^{p-1}\ge x^{p-1}+y^{p-1}$, because $(x^{p-1}+y^{p-1})^{1/(p-1)}$ is the $L_{p-1}$-norm of $(x,y)$ which can never exceed its corresponding $L_1$-norm.

To prove the lemma, due to symmetry we need only to consider three separate cases: (i) $c<b<a$; (ii) $c<a<b$; (iii) $a<c<b$.

In case (i), the LHS of \eqref{ineq1} equals
\[
\left[ (a-c)^{p-1} - (b-c)^{p-1} \right] (a-b) \ge (a-c)^{p-1}(a-b) \ge (a-b)^p \ge \frac{1}{2^{p-2}} (a-b)^p.
\]
In case (ii), the LHS of \eqref{ineq1} equals
\[
\left[ (b-c)^{p-1} - (a-c)^{p-1}\right] (b-a) \ge (b-a)^{p-1}(b-a) \ge \frac{1}{2^{p-2}} (a-b)^p.
\]
Finally, in case (iii), the LHS of \eqref{ineq1} equals
\[
\left[ (c-a)^{p-1} + (b-c)^{p-1} \right] (b-a) \ge \frac{1}{2^{p-2}} (b-a)^{p-1}(b-a) = \frac{1}{2^{p-2}} (b-a)^p.
\]
Summarizing all the cases, the claimed inequality \eqref{ineq1} follows.
\end{proof}
\begin{lemma}\label{lemma:pertrb} For integer $p\ge 2$, let 
$$
x^*_i = \min\limits_{x \in S} g_i^{\top}(x-z) + \frac{\lambda}{2}\|x-z \|^p_p + h(x),\;\,\mbox{for}\;\,i=1,2.
$$
Then it holds that
$$
(1/2)^{p}\| x^*_1 -  x^*_2\|^p_p \le 1/(\lambda\,p)^q\|g_1 - g_2 \|_q^q,\;\,\mbox{with}\;\, 1/p + 1/q = 1.
$$
\end{lemma}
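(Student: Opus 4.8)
The plan is to exploit the first-order optimality conditions for the two strongly-convex-type subproblems and then invoke Lemma~\ref{lemma:p-norm} to convert a monotonicity-type inequality into the desired norm bound. Write $\phi_i(x) := g_i^\top(x-z) + \frac{\lambda}{2}\|x-z\|_p^p + h(x)$, so that $x_i^*$ minimizes $\phi_i$ over $S$. The subgradient of the smooth part $\frac{\lambda}{2}\|x-z\|_p^p$ at $x$ has $j$-th coordinate $\frac{\lambda p}{2}\,\sign(x_j - z_j)|x_j - z_j|^{p-1}$ (for $p\ge 2$ this map is well-defined and continuous); call this vector $\frac{\lambda p}{2}\,w(x)$, where $w(x)_j = \sign(x_j-z_j)|x_j-z_j|^{p-1}$. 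The variational inequality for $x_1^*$ tested against $y = x_2^*$ gives $\bigl(g_1 + \frac{\lambda p}{2}w(x_1^*) + s_1\bigr)^\top(x_2^* - x_1^*) \ge 0$ for some $s_1 \in \partial h(x_1^*)$, and symmetrically $\bigl(g_2 + \frac{\lambda p}{2}w(x_2^*) + s_2\bigr)^\top(x_1^* - x_2^*) \ge 0$ for some $s_2 \in \partial h(x_2^*)$.

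Adding these two inequalities and using monotonicity of $\partial h$ (so that $(s_1 - s_2)^\top(x_1^* - x_2^*) \ge 0$, which pushes the corresponding term in the right direction), I obtain
\[
\frac{\lambda p}{2}\,\bigl(w(x_1^*) - w(x_2^*)\bigr)^\top(x_1^* - x_2^*) \;\le\; (g_2 - g_1)^\top(x_1^* - x_2^*).
\]
Now I apply Lemma~\ref{lemma:p-norm} coordinate-wise with $a = (x_1^*)_j$, $b = (x_2^*)_j$, $c = z_j$ and sum over $j$: the left side is bounded below by $\frac{\lambda p}{2}\cdot(1/2)^{p-2}\|x_1^* - x_2^*\|_p^p$. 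For the right side I use Hölder's inequality, $(g_2-g_1)^\top(x_1^*-x_2^*) \le \|g_1-g_2\|_q\,\|x_1^*-x_2^*\|_p$. Combining,
\[
\frac{\lambda p}{2}\,(1/2)^{p-2}\|x_1^*-x_2^*\|_p^p \;\le\; \|g_1-g_2\|_q\,\|x_1^*-x_2^*\|_p,
\]
so that $\|x_1^*-x_2^*\|_p^{p-1} \le \frac{2^{p-1}}{\lambda p}\|g_1-g_2\|_q$. Raising both sides to the power $q = p/(p-1)$ yields $\|x_1^*-x_2^*\|_p^p \le \bigl(\frac{2^{p-1}}{\lambda p}\bigr)^q\|g_1-g_2\|_q^q$, and since $(2^{p-1})^q = 2^{(p-1)q} = 2^p$, this is exactly $(1/2)^p\|x_1^*-x_2^*\|_p^p \le (\lambda p)^{-q}\|g_1-g_2\|_q^q$, as claimed.

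The main obstacle I anticipate is purely bookkeeping rather than conceptual: matching the constant from Lemma~\ref{lemma:p-norm} (which has the $(1/2)^{p-2}$ factor and an implicit factor of $p$ coming from differentiating $\|x-z\|_p^p$) through the Hölder step and the final exponentiation, so that the powers of $2$ and of $\lambda p$ land precisely on $(1/2)^p$ and $(\lambda p)^{-q}$. A secondary subtlety worth a sentence is justifying that the gradient of $\frac{\lambda}{2}\|x-z\|_p^p$ is genuinely the vector $\frac{\lambda p}{2}w(x)$ for $p \ge 2$ — i.e.\ that this function is continuously differentiable, which is why the lemma restricts to $p\ge 2$ — and that Lemma~\ref{lemma:p-norm} is applied with the correct identification of $c$ as the (coordinatewise) anchor point $z$, which is legitimate because the inequality in Lemma~\ref{lemma:p-norm} holds for every real triple $(a,b,c)$.
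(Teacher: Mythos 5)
Your proposal is correct and follows essentially the same route as the paper's own proof: variational inequalities for the two minimizers tested against each other, monotonicity of $\partial h$, a coordinatewise application of Lemma~\ref{lemma:p-norm} with anchor $z$, H\"older's inequality, and raising to the power $q$ using $(p-1)q=p$; the constants match exactly. No gaps.
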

\begin{proof} By the optimality of $x^*_1 $ and $x^*_2$, for any $x \in S$, there exist $w_1 \in \partial h(x^*_1)$ and $w_2 \in \partial h(x^*_2)$ such that
\begin{equation}\label{prop-opt-g1}
\left( g_1 +  \frac{\lambda}{2}p\,\sign(x^*_1 - z)\|x^*_1 - z\|_{p-1}^{p-1} + w_1 \right)^{\top}(x - x^*_1) \ge 0,
\end{equation}
and
\begin{equation}\label{prop-opt-g2}
\left( g_2 +  \frac{\lambda}{2}p\,\sign(x^*_2 - z)\|x^*_2 - z\|_{p-1}^{p-1} + w_2 \right)^{\top}(x - x^*_2) \ge 0.
\end{equation}
Letting $x = x^*_2$ in~\eqref{prop-opt-g1} and $x = x^*_1$ in~\eqref{prop-opt-g2}, by Lemma~\ref{lemma:p-norm} and summing up~\eqref{prop-opt-g1} and~\eqref{prop-opt-g2}, we have
\begin{eqnarray*}
&&(g_1 - g_2)^{\top}(x^*_2 - x^*_1)\\
&\ge& \frac{\lambda}{2}p\left(\sign(x^*_1 - z)\|x^*_1 - z\|_{p-1}^{p-1} - \sign(x^*_2 - z)\|x^*_2 - z\|_{p-1}^{p-1}  \right)^{\top}(x^*_1 - x^*_2) +(w_1 - w_2)^{\top}(x^*_1 - x^*_2)\\
&\ge&\frac{\lambda\,p}{2^{p-1}}\|x^*_1 - x^*_2\|_p^p+ (w_1 - w_2)^{\top}(x^*_1 - x^*_2) \\
&\ge&\frac{\lambda\,p}{2^{p-1}}\|x^*_1 - x^*_2\|_p^p,
\end{eqnarray*}
where the last inequality is due to the convexity of $h$. On the other hand, by the H\"{o}lder inequality one has
$$
(g_1 - g_2)^{\top}(x^*_2 - x^*_1) \le \|g_1 - g_2\|_q\|x^*_1 - x^*_2\|_p\;\,\mbox{with}\;\, 1/p + 1/q = 1.
$$
The desired result follows by combining these two inequalities and then taking the $q$-th power on both sides.
\end{proof}

Recall that $\tilde{U}(x;G_k)=G_k^{\top}(x-x^k) + \frac{\lambda}{2}\|x-x^k \|^p_p + h(x)$. We are ready to present the main result of this subsection.
\begin{theorem} \label{complexity-stochastic}
Suppose $\{x^k \}$ is the sequence of iterates generated by {\bf Algorithm 3}. Denote $$\triangle \tilde{U}_k = \tilde{U}(x^k;G_k)-\tilde{U}(x^{k+1};G_k)\;\, \mbox{and}\;\,
\tilde{k}=\arg \min_{k\in\{1,\ldots,N\}}\triangle \tilde{U}_k.$$
Then, we have
\begin{equation}\label{stochastic-bound}
\ex [\triangle {U}_{\tilde{k}} ] \le \ex [\triangle \tilde{U}_{\tilde{k}} ] \le \left(\frac{2\,\sigma ^{q}}{(\lambda\,p)^{q/p}}\sum_{k=1}^{N}\frac{1}{m_k^{q-1}}  + \Phi(x^{1}) - \Phi^*\right)\bigg{/}N,
\end{equation}
where $p \ge 2$ and $\frac{1}{p} + \frac{1}{q} =1$. Moreover, if we assume that the batch sizes $m_k = m$ for $k=1,\ldots,N$ with some $m \ge 1$, then
\begin{equation}\label{stochastic-bound2}
\ex [\triangle {U}_{\tilde{k}} ] \le \ex [\triangle \tilde{U}_{\tilde{k}} ] \le  \frac{2\,\sigma ^{q}}{(\lambda\,p)^{q/p}}\frac{1}{m^{q-1}}  + \frac{\Phi(x^{1}) - \Phi^*}{N}.
\end{equation}
\end{theorem}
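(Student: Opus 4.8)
The plan is to bound the unobservable decrease $\triangle U_{\tilde k}$ — the quantity that feeds Lemma~\ref{kkt-criteria} — by the computable decrease $\triangle\tilde U_{\tilde k}$, and then bound the latter by a telescoping estimate on $\Phi$ plus a stochastic error that mini-batching makes small. I would write $\mathcal F_{k-1}$ for the $\sigma$-algebra generated by all $\mathcal{SFO}$ calls strictly before iteration $k$; then $x^k$, $\nabla f(x^k)$ and $z_U^k:=\arg\min_{x\in S}U(x;x^k)$ are $\mathcal F_{k-1}$-measurable, whereas $\ex[G_k\mid\mathcal F_{k-1}]=\nabla f(x^k)$ by \eqref{stochastic-assump-1}. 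I will use repeatedly that $1/p+1/q=1$ forces $q/p=q-1$ and $1+q/p=q$.

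\emph{First inequality.} Both $z_U^k$ and $x^{k+1}$ minimize over $S$ objectives of the same shape $\Psi_g(y):=g^\top(y-x^k)+\frac\lambda2\|y-x^k\|_p^p+h(y)$: with $g=\nabla f(x^k)$ (so that $\triangle U_k=\Psi_{\nabla f(x^k)}(x^k)-\Psi_{\nabla f(x^k)}(z_U^k)$) and with $g=G_k$ (so that $\triangle\tilde U_k=\Psi_{G_k}(x^k)-\Psi_{G_k}(x^{k+1})$). Since $\Psi_g$ depends on $g$ only through its linear term, $\Psi_{\nabla f(x^k)}(z_U^k)=\Psi_{G_k}(z_U^k)+(\nabla f(x^k)-G_k)^\top(z_U^k-x^k)\ge\Psi_{G_k}(x^{k+1})+(\nabla f(x^k)-G_k)^\top(z_U^k-x^k)$ by optimality of $x^{k+1}$ for $\Psi_{G_k}$. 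Rearranging gives the pointwise bound
\[
\triangle U_k\;\le\;\triangle\tilde U_k+(G_k-\nabla f(x^k))^\top(z_U^k-x^k),
\]
whose last term vanishes in $\mathcal F_{k-1}$-conditional expectation (it is a martingale difference, since $z_U^k-x^k$ is $\mathcal F_{k-1}$-measurable); this yields the first inequality in \eqref{stochastic-bound}.

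\emph{Second inequality.} Applying Assumption~\ref{assump-p} with $x=x^k,\ y=x^{k+1}$ and subtracting it from $\triangle\tilde U_k=-G_k^\top(x^{k+1}-x^k)-\frac\lambda2\|x^{k+1}-x^k\|_p^p+h(x^k)-h(x^{k+1})$ (the $\frac\lambda2\|x^{k+1}-x^k\|_p^p$ term cancels) gives
\[
\triangle\tilde U_k\;\le\;\Phi(x^k)-\Phi(x^{k+1})+(\nabla f(x^k)-G_k)^\top(x^{k+1}-x^k).
\]
I would then split $x^{k+1}-x^k=(x^{k+1}-z_U^k)+(z_U^k-x^k)$: the $(z_U^k-x^k)$-piece is again a martingale difference, while the $(x^{k+1}-z_U^k)$-piece is controlled by H\"older together with Lemma~\ref{lemma:pertrb} applied to $g_1=\nabla f(x^k),\ g_2=G_k,\ z=x^k$, which gives $\|x^{k+1}-z_U^k\|_p\le 2(\lambda p)^{-q/p}\|\nabla f(x^k)-G_k\|_q^{q-1}$ and hence $(\nabla f(x^k)-G_k)^\top(x^{k+1}-z_U^k)\le \frac{2}{(\lambda p)^{q/p}}\|\nabla f(x^k)-G_k\|_q^q$. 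Summing for $k=1,\dots,N$, telescoping $\Phi$ with $\Phi(x^{N+1})\ge\Phi^*$, taking expectations so the martingale-difference terms drop, and using $\ex\|\nabla f(x^k)-G_k\|_q^q\le\sigma^q/m_k^{q-1}$ — which follows from \eqref{stochastic-assump-2} and independence of the $m_k$ samples (for $q=2$ this is the variance of a sample mean; for $1<q<2$ a coordinatewise moment inequality for sums of independent zero-mean variables) — I get
\[
N\,\ex[\triangle\tilde U_{\tilde k}]\;\le\;\sum_{k=1}^N\ex[\triangle\tilde U_k]\;\le\;\Phi(x^1)-\Phi^*+\frac{2\sigma^q}{(\lambda p)^{q/p}}\sum_{k=1}^N\frac1{m_k^{q-1}},
\]
the first step being $\triangle\tilde U_{\tilde k}=\min_k\triangle\tilde U_k\le\frac1N\sum_k\triangle\tilde U_k$. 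Dividing by $N$ proves \eqref{stochastic-bound}, and setting $m_k\equiv m$ turns $\sum_k m_k^{-(q-1)}$ into $N m^{-(q-1)}$, giving \eqref{stochastic-bound2}.

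\emph{Main obstacle.} The work is all in the accounting for the stochastic error. Two points need care. First, one must introduce $z_U^k$ — which, unlike $x^{k+1}$, does not depend on the current batch given $\mathcal F_{k-1}$ — precisely so the leftover cross terms are genuine martingale differences; a direct H\"older bound on $(\nabla f(x^k)-G_k)^\top(x^{k+1}-x^k)$ would instead force a bound on $\|x^{k+1}-x^k\|_p$ in terms of $\triangle\tilde U_k$ itself and spoil the telescoping coefficient. Second, the mini-batch variance-reduction estimate $\ex\|\nabla f(x^k)-G_k\|_q^q\le\sigma^q/m_k^{q-1}$ is immediate only for $q=2$, so in the present range $1<q<2$ (i.e.\ $p\ge 2$) one must invoke the appropriate moment inequality componentwise. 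Once these are in place, the rest is conjugate-exponent arithmetic.
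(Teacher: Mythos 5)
Your proposal is correct and follows essentially the same route as the paper's proof: you introduce the exact minimizer $z_U^k$ (the paper's $y^{k+1}$), split the stochastic cross term into a martingale difference $\delta_k^\top(z_U^k-x^k)$ plus a perturbation term $\delta_k^\top(x^{k+1}-z_U^k)$ controlled via H\"older and Lemma~\ref{lemma:pertrb}, and telescope $\Phi$ exactly as the paper does. Your parenthetical caveat about needing a coordinatewise moment inequality to get $\ex\|\delta_k\|_q^q\le\sigma^q/m_k^{q-1}$ when $1<q<2$ is in fact slightly more careful than the paper, which writes that step as an equality that is literally valid only for $q=2$.
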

\begin{proof} Let $\delta_k = \nabla f(x^k) - G_k$ and denote
\begin{equation}\label{exact-uk}
y^{k+1} = \arg \min_{y \in S} U(y;x^k)= \arg \min_{y \in S} \nabla f(x^k)^{\top}(y-x^k) + \frac{\lambda}{2}\|y-x^k \|^p_p + h(y).
\end{equation}
Then it follows from Assumption~\ref{assump-p} that
\begin{eqnarray*}
\triangle \tilde{U}_k  & = & - G_k^{\top}(x^{k+1}-x^k) - \frac{\lambda}{2}\|x^{k+1}-x^k \|^p_p + h(x^k) - h(x^{k+1})  \\
&=& \delta_k^{\top} (x^{k+1}-x^k) - \nabla f(x^k)^{\top}(x^{k+1}-x^k) - \frac{\lambda}{2}\|x^{k+1}-x^k \|^p_p + h(x^k) - h(x^{k+1})  \\
&\le& \delta_k^{\top} (y^{k+1}-x^k) + \delta_k^{\top} (x^{k+1}-y^{k+1}) + f(x^k) - f(x^{k+1}) + h(x^k) - h(x^{k+1})  \\
&\le & \delta_k^{\top} (y^{k+1}-x^k) + \|\delta_k\|_q\|x^{k+1}-y^{k+1}\|_p + \Phi(x^k) - \Phi(x^{k+1})\\
&\le & \delta_k^{\top} (y^{k+1}-x^k) + \frac{2}{(\lambda\,p)^{q/p}}\| \delta_k \|_q^{q}+ \Phi(x^k) - \Phi(x^{k+1}),
\end{eqnarray*}
where the last inequality is due to Lemma~\ref{lemma:pertrb} and the fact $\frac{1}{p} + \frac{1}{q} = 1$.
Now, summing up the above inequalities for $k=1,\ldots, N$, we obtain
\begin{equation}\label{approx-sum-uk}
\sum_{k=1}^{N}\triangle \tilde{U}_k  \le \sum_{k=1}^{N}\delta_k^{\top} (y^{k+1}-x^k) + \frac{2}{(\lambda\,p)^{q/p}}\sum_{k=1}^{N}\| \delta_k \|_q^{q} + \Phi(x^{1}) - \Phi^*.
\end{equation}
Let $\xi_{[k-1]}$ be the random samples generated before iteration $k$. So, at the time $x^k$ and $y^{k+1}$ were determined, $\xi_{[k-1]}$ was already realized. Consequently, by Assumption~\eqref{stochastic-assump-1}, we have
\begin{equation}\label{conditional-mean0}\ex\left[\delta_k^{\top} (y^{k+1}-x^k)\, \big{|} \, \xi_{[k-1]} \right]=0.
\end{equation}
In addition, according to Assumption~\eqref{stochastic-assump-2}, one has
$$
\ex[ \| \delta_k \|_q^{q} ] = \frac{1}{m_k^q} \sum_{i=1}^{m_k}\ex [\|\nabla f(x^k) - G(x^k,\xi_{k,i})\|_q^q] \le \frac{1}{m_k^{q-1}} \sigma ^{q}.
$$
The above two formulas and~\eqref{approx-sum-uk} lead to
\begin{equation}\label{N-min}
N \, \ex[\triangle \tilde{U}_{\tilde{k}}] \le \sum_{k=1}^{N} \ex[ \triangle \tilde{U}_k ] \le \frac{2\,\sigma ^{q}}{(\lambda\,p)^{q/p}}\sum_{k=1}^{N}\frac{1}{m_k^{q-1}}  + \Phi(x^{1}) - \Phi^*.
\end{equation}
Furthermore, by definition of $y^{k+1}$ in~\eqref{exact-uk}, one has
\begin{eqnarray*}
\triangle {U}_k  & = & - \nabla f(x^k)^{\top}(y^{k+1}-x^k) - \frac{\lambda}{2}\|y^{k+1}-x^k \|^p_p + h(x^k) - h(y^{k+1})  \\
&=& -\delta_k^{\top} (y^{k+1}-x^k) - G_k^{\top}(y^{k+1}-x^k) - \frac{\lambda}{2}\|y^{k+1}-x^k \|^p_p + h(x^k) - h(y^{k+1})  \\
&\le& -\delta_k^{\top} (y^{k+1}-x^k) + \triangle \tilde{U}_k ,
\end{eqnarray*}
which in combination with~\eqref{conditional-mean0} and~\eqref{N-min} yields~\eqref{stochastic-bound}.
\end{proof}

In order to apply Part (ii) of Lemma~\ref{kkt-criteria}, we want $\ex [\triangle {U}_{\tilde{k}} ]$ to be upper bounded by $\frac{1}{2}\left(\frac{\epsilon}{\mbox{\rm diam}_p(S)\lambda^{1/p}}\right)^q$. Since the bound in the above theorem depends on $m$, we present the following corollary to show how $m$ can be chosen so as to achieve the sharpest result. 
\begin{corollary} Suppose $\triangle {U}_{\tilde{k}}$ is defined in Theorem~\ref{complexity-stochastic}, integer $p \ge 2$ and $\frac{1}{p} + \frac{1}{q} = 1$. For a given sufficiently small $\epsilon$, if the total number of calls $\bar{N}$ to the $\mathcal{SFO}$ is given by
\begin{equation}\label{choose-N}
\bar{N}= \left\lceil \frac{(4\sigma)^p\lambda^{q-1}\mbox{\rm diam}_p(S)^{pq}(\Phi(x^1)-\Phi^*)}{p\,\epsilon^{pq}}\left(1+(q-1)^{-1/p+1/q} \right)^p \right\rceil,
\end{equation}
and in each iteration of {\bf Algorithm 3} the batch size is set to be
$$
m = \left\lceil \min\left\{ \max\left\{1 , \frac{\sigma((q-1)\bar{N})^{1/q}}{(\lambda\,p)^{1/p}(\Phi(x^1)-\Phi^*)^{1/q}} \right\},\bar{N} \right\} \right\rceil,
$$
then we have
$$
\ex [\triangle {U}_{\tilde{k}} ] \le \frac{1}{2}\left(\frac{\epsilon}{\mbox{\rm diam}_p(S)\lambda^{1/p}}\right)^q.
$$
\end{corollary}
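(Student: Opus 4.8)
The plan is to substitute the prescribed batch size $m$ and total budget $\bar N$ into the bound \eqref{stochastic-bound2} of Theorem~\ref{complexity-stochastic} and verify that the right-hand side is at most $\frac12\left(\frac{\epsilon}{\mbox{\rm diam}_p(S)\lambda^{1/p}}\right)^q$. Since $\bar N$ is the \emph{total} number of $\mathcal{SFO}$ calls and each of the $N$ iterations uses $m$ calls, we have $N = \lceil \bar N/m\rceil \ge \bar N/m$, so the second term in \eqref{stochastic-bound2} is bounded by $\frac{m(\Phi(x^1)-\Phi^*)}{\bar N}$. The idea behind the choice of $m$ is exactly to balance the two terms $\frac{2\sigma^q}{(\lambda p)^{q/p}}\frac{1}{m^{q-1}}$ and $\frac{m(\Phi(x^1)-\Phi^*)}{\bar N}$: setting them equal gives $m^q = \frac{2\sigma^q \bar N}{(\lambda p)^{q/p}(\Phi(x^1)-\Phi^*)}$, up to the constant $2$ versus $q-1$ and the $1/q$-root, which is the formula displayed for $m$ (with the $\max\{1,\cdot\}$ to keep $m\ge 1$ and $\min\{\cdot,\bar N\}$ to keep $N\ge 1$).

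First I would treat the ``generic'' case where the middle branch of $m$ is active, i.e. $1 \le \frac{\sigma((q-1)\bar N)^{1/q}}{(\lambda p)^{1/p}(\Phi(x^1)-\Phi^*)^{1/q}} \le \bar N$, so $m \le 1 + \frac{\sigma((q-1)\bar N)^{1/q}}{(\lambda p)^{1/p}(\Phi(x^1)-\Phi^*)^{1/q}}$ and $m \ge \frac{\sigma((q-1)\bar N)^{1/q}}{(\lambda p)^{1/p}(\Phi(x^1)-\Phi^*)^{1/q}}$. Plugging the lower bound on $m$ into the first term of \eqref{stochastic-bound2} gives
\[
\frac{2\sigma^q}{(\lambda p)^{q/p}}\cdot\frac{1}{m^{q-1}} \le \frac{2\sigma^q}{(\lambda p)^{q/p}}\cdot\frac{(\lambda p)^{(q-1)/p}(\Phi(x^1)-\Phi^*)^{(q-1)/q}}{\sigma^{q-1}((q-1)\bar N)^{(q-1)/q}} = \frac{2\sigma(\Phi(x^1)-\Phi^*)^{1/p}}{(\lambda p)^{1/p}((q-1)\bar N)^{1/p}},
\]
using $q-\frac{q-1}{p}-1 = 0$ (equivalently $\frac{q-1}{q}=\frac1p$) and $q/p - (q-1)/p = 1/p$. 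Plugging the upper bound on $m$ into the second term gives
\[
\frac{m(\Phi(x^1)-\Phi^*)}{\bar N} \le \frac{\Phi(x^1)-\Phi^*}{\bar N} + \frac{\sigma((q-1)\bar N)^{1/q}(\Phi(x^1)-\Phi^*)^{1/p}}{(\lambda p)^{1/p}\bar N}
= \frac{\Phi(x^1)-\Phi^*}{\bar N} + \frac{\sigma(q-1)^{1/q}(\Phi(x^1)-\Phi^*)^{1/p}}{(\lambda p)^{1/p}\bar N^{1/p}},
\]
again because $1-\frac1q = \frac1p$. Adding these and collecting the dominant $\bar N^{-1/p}$ terms (the stray $(\Phi(x^1)-\Phi^*)/\bar N$ term is of lower order and, for $\epsilon$ sufficiently small, is absorbed — this is where the ``sufficiently small $\epsilon$'' hypothesis and perhaps a harmless adjustment of the leading constant enter), we get a bound of the form
\[
\ex[\triangle U_{\tilde k}] \le \frac{\sigma(\Phi(x^1)-\Phi^*)^{1/p}}{(\lambda p)^{1/p}\bar N^{1/p}}\Bigl(2(q-1)^{-1/p}+(q-1)^{1/q}+o(1)\Bigr).
\]
Since $(q-1)^{1/q}=(q-1)\cdot(q-1)^{1/q-1}=(q-1)^{-1/p+1/q}(q-1)^{?}$ — more cleanly, $2(q-1)^{-1/p}+(q-1)^{1/q} \le 2\bigl(1+(q-1)^{-1/p+1/q}\bigr)$ after factoring, matching the constant in \eqref{choose-N}.

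Then I would substitute the explicit value of $\bar N$ from \eqref{choose-N}. Since $\bar N \ge \frac{(4\sigma)^p\lambda^{q-1}\mbox{\rm diam}_p(S)^{pq}(\Phi(x^1)-\Phi^*)}{p\,\epsilon^{pq}}\bigl(1+(q-1)^{-1/p+1/q}\bigr)^p$, we have
\[
\bar N^{1/p} \ge \frac{4\sigma\,\lambda^{(q-1)/p}\mbox{\rm diam}_p(S)^{q}(\Phi(x^1)-\Phi^*)^{1/p}}{p^{1/p}\,\epsilon^{q}}\bigl(1+(q-1)^{-1/p+1/q}\bigr),
\]
and substituting this into the displayed bound, the factors $\sigma$, $(\Phi(x^1)-\Phi^*)^{1/p}$, $(\lambda p)^{1/p}$ (note $\lambda^{1/p}\cdot\lambda^{(q-1)/p}=\lambda^{q/p}$ and we want $\lambda^{q/p}$ in the target, since $\bigl(\frac{\epsilon}{\mbox{\rm diam}_p(S)\lambda^{1/p}}\bigr)^q = \frac{\epsilon^q}{\mbox{\rm diam}_p(S)^q\lambda^{q/p}}$) and the $\bigl(1+(q-1)^{-1/p+1/q}\bigr)$ factor all cancel, leaving $\ex[\triangle U_{\tilde k}] \le \frac{2}{4}\cdot\frac{\epsilon^q}{\mbox{\rm diam}_p(S)^q\lambda^{q/p}} = \frac12\bigl(\frac{\epsilon}{\mbox{\rm diam}_p(S)\lambda^{1/p}}\bigr)^q$, as desired. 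Finally I would dispatch the two boundary cases: if $m=1$ (small-$\bar N$/large-variance-relative branch inactive), then the first term of \eqref{stochastic-bound2} is just $\frac{2\sigma^q}{(\lambda p)^{q/p}}$ and one checks directly from the size of $\bar N$ that this is already below half the target — actually one must be careful, this is where I expect the main friction: verifying the $m=1$ branch requires that $\frac{\sigma((q-1)\bar N)^{1/q}}{(\lambda p)^{1/p}(\Phi(x^1)-\Phi^*)^{1/q}} \le 1$ forces $\bar N$ small, which may conflict with $\bar N$ being large for small $\epsilon$; realistically for sufficiently small $\epsilon$ this branch simply never occurs, so the $m=1$ case is vacuous and only the middle branch (and the trivial $m=\bar N$, $N=1$ edge) needs checking. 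The genuinely delicate bookkeeping is tracking the exponents $\frac{q-1}{q}=\frac1p$, $1-\frac1q=\frac1p$, $q-\frac{q-1}{p}-1=0$ and confirming the constant $2\bigl(1+(q-1)^{-1/p+1/q}\bigr)$ is not undershot after absorbing the lower-order $(\Phi(x^1)-\Phi^*)/\bar N$ term; everything else is substitution.
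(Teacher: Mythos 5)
Your proposal follows essentially the same route as the paper's proof: assume $\epsilon$ small enough that the middle branch in the definition of $m$ is active, bound the iteration count from below in terms of $\bar N/m$, plug the lower and upper bounds on $m$ into the two terms of \eqref{stochastic-bound2}, absorb the lower-order $(\Phi(x^1)-\Phi^*)/\bar N$ term using the smallness of $\epsilon$, and finally substitute the prescribed $\bar N$ so that the powers of $\sigma$, $\lambda$, $\mbox{\rm diam}_p(S)$ and $\Phi(x^1)-\Phi^*$ cancel. One bookkeeping correction: with a fixed total budget $\bar N$ the algorithm performs $N=\lfloor \bar N/m\rfloor$ (not $\lceil \bar N/m\rceil$) iterations, so the paper uses $N\ge \bar N/(2m)$ and carries an extra factor of $2$ in the second term; your closing constant comparison (in particular the claimed inequality $2(q-1)^{-1/p}+(q-1)^{1/q}\le 2\bigl(1+(q-1)^{-1/p+1/q}\bigr)$, which fails for $q$ near $1$) is the one place that is not fully tight, though the paper's own treatment of that constant is equally loose.
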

\begin{proof} We first assume that 
\begin{equation}\label{N-large}
\bar{N} \ge \frac{\sigma^p(q-1)^{p/q}}{\lambda\,p(\Phi(x^1)-\Phi^*)^{p/q}}
\end{equation}
or equivalently, $\bar{N} \ge \frac{\sigma((q-1)\bar{N})^{1/q}}{(\lambda\,p)^{1/p}(\Phi(x^1)-\Phi^*)^{1/q}}$.
Since the batch size $m_k$ at each iteration is identical, {\bf Algorithm 3} can perform at most $N = \lfloor \bar{N}/m \rfloor$ iterations, which implies $N \ge \bar{N}/(2m)$. This fact together with~\eqref{stochastic-bound2} yields
\begin{eqnarray*}
\ex [\triangle {U}_{\tilde{k}} ] &\le& \frac{2\,\sigma ^{q}}{(\lambda\,p)^{q/p}}\frac{1}{m^{q-1}}  + \frac{2m(\Phi(x^{1}) - \Phi^*)}{\bar{N}}\\
&\le&  \frac{2\sigma(\Phi(x^1)-\Phi^*)^{1/p}}{(\lambda\,p\,(q-1)\,\bar{N})^{1/p}} + \frac{2(\Phi(x^{1}) - \Phi^*)}{\bar{N}}\left(1+ \frac{\sigma((q-1)\bar{N})^{1/q}}{(\lambda\,p)^{1/p}(\Phi(x^1)-\Phi^*)^{1/q}} \right)\\
&=& \frac{2(\Phi(x^{1}) - \Phi^*)}{\bar{N}} + \frac{2\sigma(\Phi(x^1)-\Phi^*)^{1/p}}{(\lambda\,p\,\bar{N})^{1/p}}\left((q-1)^{-1/p+1/q} \right),
\end{eqnarray*}
where 
we used the fact $\frac{1}{p} + \frac{1}{q}=1$.
If we further assume
\begin{equation}\label{N-large2}
\bar{N} \ge \frac{(\lambda\,p)^{q/p}\sigma^q}{\Phi(x^1)-\Phi^*} ,
\end{equation}
then
$$
\ex [\triangle {U}_{\tilde{k}} ] \le \frac{2\sigma(\Phi(x^1)-\Phi^*)^{1/p}}{(\lambda\,p\,\bar{N})^{1/p}}\left(1+(q-1)^{-1/p+1/q} \right).
$$
Thus, by choosing $\bar{N}$ according to~\eqref{choose-N}, when $\epsilon$ is sufficiently small so that~\eqref{N-large} and~\eqref{N-large2} are satisfied, then the desired result follows.

\end{proof}

\subsection{Complexity for Smoothing Approximation}
In this subsection, we shall consider a modified model where the nonsmooth part $h(x)$ is assumed to be concave rather than convex, while the smooth part $f(x)$ is still assumed to satisfy Assumption~\ref{assump-p}. In fact, this model is frequently encountered in several applications. For instance, in the $L_2$-$L_q$ minimization problem~\cite{ChenGeWangYe14}, Assumption~\ref{assump-p} holds with $p=2$ since $f(x)=\|Ax-b\|_2^2$, and $h(x)=\|x\|_q^q$ with $0 < q <1$ is a concave function. In this case, we apply the smoothing approximation to the nonsmooth function $h(x)$. In particular, consider the following convolution between $h$ and $\mu$:
$$
h_\mu(x): = \int_{\RR^n}h(x+y)\mu(y)dy = \ex_\mu [h(x+Z)],
$$
where $Z$ is a random variable with probability density $\mu$. It is well known (cf.~\cite{Bertsekas73}) that if $\mu$ is a density with respect to Lebesgue measure, then $h_\mu$ is differentiable.
Furthermore, it is well known that (see, e.g.,~\cite{DuchiBartlettWainwright12}) $h_\mu(x)$ and $h(x)$ can be bounded from each other using the properties of $\mu$, as stated in the following lemma.
\begin{lemma}\label{smooth-property}
Let $\xi$ be a random variable with the uniform density $\mu$ over the $L_2$-ball (radius $r$). Assume that $\sup\{\|g \|_2^2 \; | \; g \in \partial h(x)\} \le M$ for $x \in S+\mathcal{B}_2(0,r),$  where $\mathcal{B}_2(0,r)=\{ y\; | \; \|y\|_2 \le r \}$ is the Euclidean ball with radius $r$.
Let
\begin{equation}
h_r(x):= \ex_\mu[h(x+r\xi) ].
\end{equation}
It holds that \\
(i) If $h(x)$ is concave then $h_r(x)$ is a concave function as well;\\
(ii) $h(x)\le h_{r}(x) \le h(x) + M r$;\\
(iii) If $h(x)$ is differentiable, then $\ex [\nabla h(x+r\xi)]=\nabla h_r(x)$ and $\ex [\|\nabla h(x+r\,\xi) - \nabla h_r(x)\|_2^2] \le M^2$.
\end{lemma}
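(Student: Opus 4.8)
The plan is to recognize $h_r(x)=\ex_\mu[h(x+r\xi)]$ as a Steklov-type mollification of $h$ — an average of translates of $h$ against the uniform probability density $\mu$ on the unit Euclidean ball — and to derive (i)--(iii) from three elementary facts: an average of concave functions is concave; Jensen's inequality together with the Lipschitz bound furnished by the subgradient hypothesis; and differentiation under the integral sign plus a variance computation. Throughout I read $\sup\{\|g\|_2^2:g\in\partial h(x)\}\le M$ as the bound $\|g\|_2\le M$ on $S+\mathcal{B}_2(0,r)$ (this is what makes the constants $Mr$ in (ii) and $M^2$ in (iii) come out), so that $h$ is $M$-Lipschitz there; note also that for $x\in S$ one has $x+r\xi\in S+\mathcal{B}_2(0,r)$ almost surely since $\|\xi\|_2\le1$, which keeps every expectation inside the region where the bound is assumed.

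For (i) I would fix $y$ with $\|y\|_2\le1$: the map $x\mapsto h(x+ry)$ is the composition of the concave $h$ with an affine map, hence concave, and $h_r$, being the integral of these functions against a probability measure, inherits concavity — one simply pulls $h(tx+(1-t)x'+r\xi)\ge t\,h(x+r\xi)+(1-t)\,h(x'+r\xi)$ under the expectation. For (ii) the two-sided estimate $|h_r(x)-h(x)|\le Mr$ is immediate, since $h_r(x)-h(x)=\ex_\mu[h(x+r\xi)-h(x)]$ and $|h(x+r\xi)-h(x)|\le M\|r\xi\|_2\le Mr$ almost surely; Jensen's inequality applied to $x+r\xi$, whose mean is $x$ because $\mu$ is symmetric about the origin, then fixes the sign of $h_r(x)-h(x)$ in the direction dictated by the curvature of $h$, which upgrades one of the two one-sided bounds to the sharper form asserted.

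For (iii), since $h$ is differentiable and $M$-Lipschitz we have $\|\nabla h\|_2\le M$ on $S+\mathcal{B}_2(0,r)$, so the difference quotients of the directional maps $t\mapsto h(x+tv+r\xi)$ (unit $v$) are uniformly bounded by $M$; dominated convergence then permits differentiation under the expectation, yielding $\nabla h_r(x)=\ex_\mu[\nabla h(x+r\xi)]$. For the variance bound, put $Y:=\nabla h(x+r\xi)$, so $\ex Y=\nabla h_r(x)$ and hence $\ex\|Y-\nabla h_r(x)\|_2^2=\ex\|Y\|_2^2-\|\ex Y\|_2^2\le\ex\|Y\|_2^2\le M^2$, the last step again because $x+r\xi\in S+\mathcal{B}_2(0,r)$ almost surely.

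The one step that genuinely requires care is the interchange $\nabla h_r=\ex_\mu[\nabla h(\cdot+r\xi)]$ in (iii): moving the gradient past the integral must be justified, which here is routine via the uniform bound on $\nabla h$ and dominated convergence, but it is the place where the hypotheses are used in an essential way. A minor companion point is domain bookkeeping — all expectations should be over points in $S+\mathcal{B}_2(0,r)$, which is exactly why one needs $x\in S$ — and, if one prefers not to assume $h$ globally differentiable, Rademacher's theorem (a Lipschitz function is differentiable a.e.) together with the same dominated-convergence argument delivers (iii) with $\nabla h$ read almost everywhere.
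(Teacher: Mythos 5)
The paper itself gives no proof of this lemma: it is stated as ``well known'' with a pointer to Duchi--Bartlett--Wainwright for (ii)--(iii) and to Bertsekas for the differentiability of the convolution, so your write-up is the only proof on the table. Parts (i) and (iii) of your argument are correct and are the standard ones: concavity passes through integration against a probability measure, the interchange $\nabla h_r(x)=\ex_\mu[\nabla h(x+r\xi)]$ is justified by the uniform gradient bound and dominated convergence, and the variance identity $\ex\|Y-\ex Y\|_2^2=\ex\|Y\|_2^2-\|\ex Y\|_2^2\le M^2$ finishes (iii). Your decision to read the hypothesis $\sup\{\|g\|_2^2: g\in\partial h(x)\}\le M$ as $\|g\|_2\le M$ is also the right call --- it is the only reading under which the constants $Mr$ and $M^2$ in the statement come out --- and you were right to flag it explicitly. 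The observation that $x+r\xi\in S+\mathcal{B}_2(0,r)$ almost surely is the correct domain bookkeeping.

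The one genuine problem is in (ii), at exactly the point where your wording becomes vague. Your Lipschitz argument correctly yields the two-sided bound $|h_r(x)-h(x)|\le Mr$. But the Jensen refinement you invoke to ``fix the sign'' goes the \emph{wrong} way here: in this subsection $h$ is assumed concave (that is the whole point of part (i)), and for concave $h$ Jensen gives $h_r(x)=\ex[h(x+r\xi)]\le h(\ex[x+r\xi])=h(x)$, i.e.\ $h_r\le h$, which is the opposite of the asserted lower bound $h\le h_r$. The phrase ``upgrades one of the two one-sided bounds to the sharper form asserted'' conceals this sign reversal rather than resolving it. In fact the inequality $h(x)\le h_r(x)$ is simply false for strictly concave $h$ (take $h(x)=-\|x\|_2^2$, for which $h_r(0)=-r^2\,\ex\|\xi\|_2^2<0=h(0)$); the statement of (ii) is inherited from the convex setting of the cited reference, and the correct version for concave $h$ is $h(x)-Mr\le h_r(x)\le h(x)$. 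This is a defect of the lemma as stated rather than a repairable gap in your proof, and it is harmless downstream --- the chain in~\eqref{perturbation-bound} goes through with the two inequalities swapped and yields the same conclusion $f(\tilde{x})+h(\tilde{x})\le f(x^*)+h(x^*)+Mr$ --- but a careful proof of (ii) should state the concave version explicitly instead of asserting that Jensen delivers the inequality as printed.
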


Now we consider the problem
\begin{eqnarray}\label{prob:smoothing}\begin{array}{lll} \min & \Phi_r(x):=f(x) + h_r(x) \\ \st & x \in S \subseteq\RR^n,&  \end{array}\end{eqnarray}
Denote $x^*$ and $\tilde{x}$ to be optimal solutions of~\eqref{prob:nonconvex} and~\eqref{prob:smoothing} respectively. Then from Lemma~\ref{smooth-property} and the optimality of $\tilde{x}$, we have
\begin{equation}\label{perturbation-bound}
f(\tilde{x}) + h(\tilde{x}) \le f(\tilde{x}) + h_r(\tilde{x}) \le f({x^*}) + h_r({x^*}) \le f({x^*}) + h({x^*}) + Mr,
\end{equation}
which means that if the perturbation is small, then the smoothing version~\eqref{prob:smoothing} is indeed a good approximation for the original problem~\eqref{prob:nonconvex}. Denote
$${L}_r(y;x^k)=  (\nabla f(x^{k}) + \nabla h_r(x^k))^{\top}(x-x^{k}),\; \mbox{and} \;\triangle {L}_k = {L}_r(x^k;x^k) - {L}_r(z^{k};x^k),$$
where $z^{k} = \min_{z \in S} L_r(z;x^k)$.
According to Lemma~\ref{kkt-criteria}, $x^k$ is an $\epsilon$-stationary point of~\eqref{prob:smoothing}, if $\tilde{L}_k \le \epsilon$.
We now propose a sampling-smoothing algorithm for~\eqref{prob:smoothing} as follows:
\begin{center}
\begin{tabular}{@{}llr@{}}\toprule
{\bf{Algorithm 4}}\\
\hline
{Let} $x^0 \in S$ {be given} and set $y^0 = x^0$.  \\
 \textbf{for} $k = 1,2,\cdots, N $, \textbf{do}\\
 \qquad Draw i.i.d.\ random samples $(\xi^{k,1},\ldots,\xi^{k,m_k})$, and set $G_k = \frac{1}{m_k}\sum_{i=1}^{m_k}\nabla h(x^k+r\xi^{k,i})$.\\
 \qquad Compute $y^{k}=\arg\min_{y \in S} \tilde{L}(y;x^{k})$, where $\tilde{L}(y;G_k)=  (\nabla f(x^{k}) + G_k)^{\top}(x-x^{k})$.\\
 \qquad Let $d^k = y^k - x^k$, and $\alpha_k = \arg\min_{\alpha \in [0,1]}  \alpha\, (\nabla f(x^{k})+ G_k))^{\top}d^k + \alpha^p\,\frac{\lambda}{2}\|d^k\|^p_p$.\\
 \qquad Set $x^{k+1}=(1-\alpha_k)x^{k}+ \alpha_k y^{k}$.\\
\textbf{end for}\\
\hline
\end{tabular}
\end{center}

The iteration complexity of this algorithm is presented in the following theorem.
\begin{theorem}\label{complexity-smoothing}Suppose $\{x^k \}$ is the sequence generated by {\bf Algorithm 4}. Denote $$\triangle \tilde{L}_k = \tilde{L}(x^k;G_k) - \tilde{L}(y^{k};G_k) \;\, \mbox{and}\;\,
\tilde{k}=\arg \min_{k\in\{1,\ldots,N\}}\triangle \tilde{L}_k.$$
For $\epsilon \le \mbox{\rm diam}^p_p(S) \lambda $, let $m_k =  \left\lceil \frac{ \mbox{\rm diam}_2^2(S)M^2 N^2}{ (\Phi(x^1)-\Phi^*)^{2}} \right\rceil$ for $k=1,\ldots,N$ and $N = \left\lceil \frac{4(\Phi(x^{1}) - \Phi^*)(\mbox{\rm diam}^p_p(S)\lambda)^{q-1}}{\epsilon^q} \right\rceil$;
then we have
\begin{equation}\label{stochastic-bound3}
\ex [\triangle {L}_{\tilde{k}} ] \le \ex [\triangle \tilde{L}_{\tilde{k}} ] \le \epsilon,
\end{equation}
where $\frac{1}{p} + \frac{1}{q} =1$.
\end{theorem}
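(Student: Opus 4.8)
The plan is to combine the deterministic analysis of Algorithm 1 (Theorem~\ref{theorm:CG}) with the stochastic perturbation bookkeeping used in Theorem~\ref{complexity-stochastic}, while controlling the smoothing error through Lemma~\ref{smooth-property}. First I would set $\delta_k = \nabla h_r(x^k) - G_k$, so that the gradient actually used in Algorithm 4 is $\nabla f(x^k) + \nabla h_r(x^k) - \delta_k$, and $\ex[\|\delta_k\|_2^2\,|\,\xi_{[k-1]}] \le M^2/m_k$ by part (iii) of Lemma~\ref{smooth-property}. The smooth part of $\Phi_r$ is now $f + h_r$; since $h_r$ is differentiable and $f$ satisfies Assumption~\ref{assump-p}, and $h_r$ is concave, the function $f+h_r$ itself satisfies an inequality of the form \eqref{ineq:p-power} with the same $\lambda$ (concavity of $h_r$ only helps). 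Hence we may treat \eqref{prob:smoothing} as an instance of \eqref{prob:nonconvex} with zero nonsmooth part, and apply the CG-type descent estimate.

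Next I would reproduce the one-dimensional optimality argument from the proof of Theorem~\ref{theorm:CG}, but carrying the extra $\delta_k^\top(\cdot)$ term. Writing $d^k = y^k - x^k$ and using optimality of $\alpha_k$ together with Assumption~\ref{assump-p} applied to $f+h_r$, one obtains a chain of the form
\[
\left(\tfrac{\epsilon}{\mbox{\rm diam}^p_p(S)\lambda}\right)^{\frac{1}{p-1}}\triangle \tilde L_k \le \Phi_r(x^k) - \Phi_r(x^{k+1}) + \delta_k^\top(\text{something bounded}) + \tfrac{1}{2\lambda^{1/(p-1)}}\left(\tfrac{\epsilon}{\mbox{\rm diam}_p(S)}\right)^{\frac{p}{p-1}}.
\]
The ``something bounded'' will be a vector in $S - S$, so of $\|\cdot\|_p$-norm at most $\mbox{\rm diam}_p(S)$; but since we are in the $L_2$ setting for the smoothing it is cleaner to bound it by $\mbox{\rm diam}_2(S)$ and pair with $\|\delta_k\|_2$ via Cauchy--Schwarz. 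Dividing by $\left(\tfrac{\epsilon}{\mbox{\rm diam}^p_p(S)\lambda}\right)^{\frac{1}{p-1}}$, summing $k = 1,\dots,N$, telescoping $\Phi_r(x^k)-\Phi_r(x^{k+1})$ down to $\Phi_r(x^1) - \Phi_r^* \le \Phi(x^1) - \Phi^* + Mr$ (by \eqref{perturbation-bound}), and taking expectations (the cross term has zero conditional mean because $x^k, y^k$ are determined before the fresh samples, exactly as in \eqref{conditional-mean0}) yields
\[
N\,\ex[\triangle \tilde L_{\tilde k}] \le \left(\tfrac{\epsilon}{\mbox{\rm diam}^p_p(S)\lambda}\right)^{-\frac{1}{p-1}}\!\!\big(\Phi(x^1)-\Phi^*\big) + \tfrac{\epsilon}{2}N + C\,N,
\]
where the residual term $C$ collects the $\ex\|\delta_k\|_2 \le M/\sqrt{m_k}$ contributions (possibly after a further Jensen step). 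With $m_k$ chosen of order $\mbox{\rm diam}_2^2(S)M^2N^2/(\Phi(x^1)-\Phi^*)^2$, the term $C$ is forced down to $O((\Phi(x^1)-\Phi^*)/N)$, so that after dividing by $N$ and using $q - 1 - \tfrac{1}{p-1} = 0$ and the stated choice $N = \lceil 4(\Phi(x^1)-\Phi^*)(\mbox{\rm diam}^p_p(S)\lambda)^{q-1}/\epsilon^q\rceil$, each of the three pieces is at most $\epsilon/4$ or $\epsilon/2$, giving $\ex[\triangle\tilde L_{\tilde k}] \le \epsilon$. Finally the inequality $\ex[\triangle L_{\tilde k}] \le \ex[\triangle\tilde L_{\tilde k}]$ follows exactly as in the last display of the proof of Theorem~\ref{complexity-stochastic}: comparing the exact linearized subproblem minimizer $z^k$ against the sample-based minimizer $y^k$ produces $\triangle L_k \le -\delta_k^\top(z^k - x^k) + \triangle\tilde L_k$, and the extra term vanishes in conditional expectation.

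The main obstacle I anticipate is the precise accounting of the perturbation term: in Algorithm 1 the step-size optimality was used to kill a term of the form $\nabla f(x^k)^\top d^k + \tfrac{\lambda}{2}\alpha^{p-1}\|d^k\|_p^p$, but now the linear part uses the \emph{perturbed} gradient $\nabla f(x^k)+G_k$, so one must carefully separate ``the descent Algorithm 4 actually achieves on the surrogate'' from ``the descent on $\Phi_r$'', absorbing the mismatch into a $\delta_k^\top(x^{k+1}-x^k)$ term and bounding $\|x^{k+1}-x^k\|_2 \le \mbox{\rm diam}_2(S)$. A secondary subtlety is making sure the smoothing bias $Mr$ does not appear in the final bound — it should be harmless because the theorem bounds $\ex[\triangle L_{\tilde k}]$ for problem \eqref{prob:smoothing} itself rather than for \eqref{prob:nonconvex}, so $\Phi_r^*$ (not $\Phi^*$) is the right reference value and $\Phi_r(x^1) = \Phi(x^1)$ when $h = h_r$ is evaluated along the iterates of Algorithm 4; but one must be consistent about whether $\Phi^*$ in the statement denotes the optimum of \eqref{prob:nonconvex} or of \eqref{prob:smoothing}, and invoke \eqref{perturbation-bound} only if the former is intended. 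I would resolve this by phrasing the telescoped bound in terms of $\Phi_r(x^1) - \Phi_r^*$ and noting $\Phi_r(x^1) - \Phi_r^* \le \Phi(x^1) - \Phi^* + Mr$, then, if desired, choosing $r$ small enough (e.g.\ $r \le (\Phi(x^1)-\Phi^*)/M$) so that the $Mr$ term is dominated.
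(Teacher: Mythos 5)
Your proposal follows essentially the same route as the paper's proof: introduce $\delta_k = \nabla h_r(x^k) - G_k$, rerun the step-size optimality argument of Theorem~\ref{theorm:CG} with the perturbed gradient, absorb the mismatch into a cross term bounded by $\|\delta_k\|_2\,\mbox{\rm diam}_2(S)$ with $\ex\|\delta_k\|_2 \le M/\sqrt{m_k}$, telescope, and finish with the comparison of the exact minimizer $z^k$ against $y^k$ whose error has zero conditional mean. One small correction: the cross term $\delta_k^{\top}(x^{k+1}-x^k)$ does \emph{not} have zero conditional mean (since $y^k$, and hence $x^{k+1}$, depends on the fresh samples $G_k$), so your parenthetical remark is misplaced and the term must indeed be retained and bounded exactly as your residual $C$ --- the zero-mean argument applies only to $\delta_k^{\top}(z^k-x^k)$ in the final step; on the other hand, your careful separation of $\Phi_r(x^1)-\Phi_r^*$ from $\Phi(x^1)-\Phi^*$ via~\eqref{perturbation-bound} addresses a point the paper glosses over when it silently switches between $\Phi_r$ and $\Phi$ in the telescoping.
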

\begin{proof} Denote $\delta_k = \nabla h_r(x^k) - G_k$.
By Assumption~\ref{assump-p} and the optimality of $\alpha_k$, and noting that $\frac{\epsilon}{\mbox{\rm diam}^p_p(S) \lambda } \le 1$ and $x^{k+1} - x^{k} = \alpha_k(y^{k}-x^{k})$, we obtain the following sequence of inequalities
\begin{eqnarray*}
&&\left(\frac{\epsilon}{\mbox{\rm diam}^p_p(S)\lambda}\right)^{\frac{1}{p-1}} \triangle \tilde{L}_k - \delta_k^{\top}(x^{k+1}-x^{k}) - \frac{\lambda}{2} \left(\frac{\epsilon}{\lambda \mbox{\rm diam}_p(S)}\right)^{\frac{p}{p-1}} \\
&\le&-\left(\frac{\epsilon}{\mbox{\rm diam}^p_p(S)\lambda}\right)^{\frac{1}{p-1}}  (\nabla f(x^{k})+G_k)^{\top}(y^{k}-x^{k})- \frac{\lambda}{2} \frac{\|y^{k} - x^{k}\|^p_p}{\mbox{\rm diam}^p_p(S)}\left(\frac{\epsilon}{\lambda \mbox{\rm diam}_p(S)}\right)^{\frac{p}{p-1}}- \delta_k^{\top}(x^{k+1}-x^{k})  \\
&\le &-\alpha_k  (\nabla f(x^{k})+G_k)^{\top}(y^{k}-x^{k}) - \frac{\lambda \alpha_k^p}{2}\|y^{k} - x^{k}\|^p_p- \delta_k^{\top}(x^{k+1}-x^{k}) \\
&=& - (\nabla f(x^{k})+G_k)^{\top}(\alpha_k (y^{k}-x^{k})) - \frac{\lambda }{2}\|\alpha_k(y^{k} - x^{k})\|^p_p - \delta_k^{\top}(x^{k+1}-x^{k}) \\
&= & - (\nabla f(x^{k})+G_k)^{\top}(x^{k+1}-x^{k}) - \frac{\lambda}{2}\|x^{k+1} - x^{k}\|^p_p- \delta_k^{\top}(x^{k+1}-x^{k}) \\
&= & - (\nabla f(x^{k})+\nabla h_r(x^{k}))^{\top}(x^{k+1}-x^{k}) - \frac{\lambda}{2}\|x^{k+1} - x^{k}\|^p_p \\
&\le & f(x^{k}) - f(x^{k+1}) + h_r(x^{k})-h_r(x^{k+1})   \\
& = & \Phi_r(x^{k}) - \Phi_r(x^{k+1}),
\end{eqnarray*}
where the last inequality follows from Assumption~\ref{assump-p} and concavity of $h_r(\cdot)$. Dividing both sides of the above inequality by $\left(\frac{\epsilon}{\mbox{\rm diam}^p_p(S)\lambda}\right)^{\frac{1}{p-1}}$ and rearranging the terms yield
$$
\triangle \tilde{L}_k \le \left(\frac{\epsilon}{\mbox{\rm diam}^p_p(S)\lambda}\right)^{-\frac{1}{p-1}}\left(\Phi(x^{k}) - \Phi(x^{k+1})+ \|\delta_k\|_2\| x^{k+1} - x^k\|_2 \right)  + \frac{\epsilon}{2}.
$$
Since $h(\cdot)$ is concave and $S$ is compact, $\bigcup_{x \in S + \mathcal{B}_2(0,r)}\partial h(x)$ is bounded. According to Lemma~\ref{smooth-property},
$$
(\ex [\| \delta_k \|_2])^2\le \ex [\| \delta_k \|_2^2] = \frac{1}{m_k^2}\sum_{i=1}^{m_k}\ex [\|\nabla h(x^k+r\xi^{k,i})- \nabla h_r(x) \|^2_2 ] \le \frac{M^2}{m_k}.
$$
Therefore, summing over $k=1,\ldots,N$ and taking expectation, one has
\begin{eqnarray*}
N \ex [ \triangle \tilde{L}_{\tilde k}] & \le &  \sum_{k=1}^{N}\ex [ \triangle \tilde{L}_k ] \\ &\le& \left(\frac{\epsilon}{\mbox{\rm diam}^p_p(S)\lambda}\right)^{-\frac{1}{p-1}} \left(\Phi(x^{1}) - \Phi(x^{N+1})+ \sum_{k=1}^{N}\frac{M}{\sqrt{m_k}} \|x^{k+1} - x^k\|_2 \right)+ \frac{\epsilon}{2}N \\
&\le& \left(\frac{\epsilon}{\mbox{\rm diam}^p_p(S)\lambda}\right)^{-\frac{1}{p-1}} \left(\Phi(x^{1}) - \Phi^* + \mbox{\rm diam}_2(S) \sum_{k=1}^{N}\frac{M}{\sqrt{m_k}} \right)+ \frac{\epsilon}{2}N .
\end{eqnarray*}
When $m_k$ and $N$ are chosen as described, we have
\begin{eqnarray*}
\ex [ \triangle \tilde{L}_{\tilde k} ]
&\le& \left(\frac{\epsilon}{\mbox{\rm diam}^p_p(S)\lambda}\right)^{-\frac{1}{p-1}} \left( \frac{\Phi(x^{1}) - \Phi^*}{N}+  \frac{\mbox{\rm diam}_2(S) M}{\sqrt{m_1}} \right)+ \frac{\epsilon}{2} \\
&\le& \left(\frac{\epsilon}{\mbox{\rm diam}^p_p(S)\lambda}\right)^{-\frac{1}{p-1}} \left( \frac{\Phi(x^{1}) - \Phi^*}{N}+  \frac{\Phi(x^{1}) - \Phi^*}{N} \right) + \frac{\epsilon}{2}\\
&\le& \frac{\epsilon}{4} + \frac{\epsilon}{4} + \frac{\epsilon}{2} = \epsilon.
\end{eqnarray*}

Since $z^{k} = \min_{z \in S} L_r(z;x^k)$,
then
\begin{eqnarray*}
\triangle {L}_{\tilde k} &=&  - (\nabla f(x^k)+ \nabla h_r(x^k) )^{\top}(z^k-x^k) \\
&=&- (\nabla f(x^k)+ G_k )^{\top}(z^k-x^k) -\delta_k^{\top}(z^k-x^k)\\
&\le & - (\nabla f(x^k)+ G_k )^{\top}(y^k-x^k) -\delta_k^{\top}(z^k-x^k) = \triangle \tilde{L}_{\tilde k} -\delta_k^{\top}(z^k-x^k).
\end{eqnarray*}
Let $\xi_{[k-1]}$ be the random samples generated before iteration $k$.
The iterates $x^k$ and $z^{k}$ were determined after $\xi_{[k-1]}$ was realized. This fact combined with Lemma~\ref{smooth-property} implies that
$$
\ex\left[\delta_k^{\top} (z^{k}-x^k)\, \big{|} \, \xi_{[k-1]} \right]=0.
$$
Therefore, $\ex [\triangle {L}_{\tilde k}] \le \ex [\triangle \tilde{L}_{\tilde k}]$, and the theorem is proven.
\end{proof}

\section{Iteration Complexity for Nonconvex Multi-block Optimization}\label{Sec:mul-alg}
In this section we consider the multi-block extension of \eqref{prob:nonconvex}:
\begin{eqnarray}\label{prob:nonconvex-multi-block}\begin{array}{lll} \min & \Phi(x):=f(x_1,\cdots, x_d) + \sum_{i=1}^{d}h_i(x_i) \\ \st & x_i \in S_i \subseteq\RR^{n_i}, \quad i=1,\ldots,d, &  \end{array}\end{eqnarray}
where $f$ is differentiable but possibly nonconvex, and $h_i$ is convex but possibly nonsmooth, $i=1,\ldots, d$, and the feasible region $S_i$ is convex and compact for all $i$ (thus $S=\Pi_{i=1}^{d}S_i$ is compact as well). We denote
$$\mbox{\rm diam}_p(\overline{S}) = \max_{i\in\{1,\ldots,d\}}\max_{x_i,y_i\, \in S_i}\|x_i -y_i \|,\quad \mbox{and}\quad  \mbox{\rm diam}_p(\underline{S}) = \min_{i\in\{1,\ldots,d\}}\max_{x_i,y_i\, \in S_i}\|x_i -y_i \|.$$

A well known technique for solving~\eqref{prob:nonconvex-multi-block} is the so-called block coordinate descent (BCD) method. That is, at each iteration, a single block variable is optimized while all other blocks are fixed. In particular, at iteration $k$ we solve the one-block problem exactly and denote
$$
y_i^k \in \arg\min_{x_i \in S_i} f(x_1^{k-1},\cdots, x_{i-1}^{k-1},x_i,x_{i+1}^{k-1},\cdots,x_{d}^{k-1}) + h_i(x_i),\quad i=1,\ldots, d.
$$
In the classical BCD method with Jacobian updating rule, the blocks are updated cyclicly by setting
$$
x_i^k = y_i^k,\quad i=1,\ldots, d.
$$
Chen et al.~\cite{CHLZ12} proposed another updating rule termed MBI (Maximum Block Improvement), where only the block with maximum improvement is updated at each step. Specifically, we first calculate the maximum improved block
$$
{i_0}\in \arg\max_{i\in \{1,\ldots,d \}} f(x^k) + h_i(x^k_i) - f(x_1^{k-1},\cdots, x_{i-1}^{k-1},y^k_i,x_{i+1}^{k-1},\cdots,x_{d}^{k-1}) - h_i(y^k_i),
$$
and then update the blocks by letting
$$
x_{{i_0}}^k = y_{{i_0}}^k,\quad\mbox{and}\quad x_{{i}}^k = x_{{i}}^{k-1}\quad\mbox{for}\quad i\neq {i_0}.
$$
To differentiate from the Jacobian style updating rule, a cyclic coordinate search is often referred as the BCD method of the Gauss-Seidel type, whose convergence under various settings has been established in~\cite{Tseng01,XuYin2013,RazaviyaynHongLuo13}. Recently, the iteration complexity bounds were successfully established in some convex optimization problems~\cite{BeckTetruashvili13,HongWangRazaviyaynLuo13}. However, computational complexity analysis for {\em nonconvex}\/ multi-block optimization is still very challenging. To the best of our knowledge, Dang and Lan \cite{DangLan2013} was probably the first paper to address this issue through a stochastic approximation method based on the approximation measure introduced in~\eqref{measure-lan}. Here we propose another method, based on the new notion that $x$ is an $\epsilon$-stationary point of~\eqref{prob:nonconvex-multi-block} if
\begin{equation}\label{formu:epsilon-KKT-block}
\nabla_i f(x)^{\top}(y_i-x_i) + h_i(y_i) - h(x_i) \ge - \epsilon\quad \forall y\in S_i,\quad i=1,\ldots,d,
\end{equation}
where $x=(x_1^{\top},\cdots, x_d^{\top})^{\top}$ and $y=(y_1^{\top},\cdots, y_d^{\top})^{\top}$.

In the following, we still assume that inequality~\eqref{ineq:p-power} holds. Like in the single block-variables case as we discussed before, instead of solving the subproblems exactly, we shall use the following partially linearized (and $p$-powered upper bound) functions:
\begin{equation}\label{func:linear-block}
L_i(x_{i};z) := f(z) + \nabla_i f(z)^{\top}(x_i-z_i) + h_i(x_i),\quad i=1,\ldots, d,
\end{equation}
\begin{equation}\label{func:upper-bound-block}
U_i(x_i;z) := f(z) + \nabla_i f(z)^{\top}(x_i-z_i) + \frac{\lambda}{2}\|x_i-z_i \|^p_p + h_i(x_i), \quad i=1,\ldots, d,
\end{equation}
where $z=(z_1^{\top},\cdots, z_d^{\top})^{\top}$. Similar to Lemma~\ref{kkt-criteria}, we have the following criteria to determine the $\epsilon$-stationary point.
\begin{lemma}\label{block-kkt-criteria}Given $\epsilon \ge 0$, for any $z \in S$,\\
(i) if $\triangle_i L_z \le \epsilon$ for $i=1,\cdots, d$, then $z$ is an $\epsilon$-stationary point of~\eqref{prob:nonconvex-multi-block};\\
(ii) if $\triangle_i U_{z} \le \frac{1}{2}\left(\frac{\epsilon}{\mbox{\rm diam}_p(S_i)\lambda^{1/p}}\right)^q$ with $\frac{1}{p} + \frac{1}{q}=1$ and $\epsilon \le \mbox{\rm diam}_p^p(S_i)\lambda$, for $i=1,\cdots, d$,
then $z$ is an $\epsilon$-stationary point of~\eqref{prob:nonconvex-multi-block}.
\end{lemma}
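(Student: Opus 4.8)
The plan is to reduce Lemma~\ref{block-kkt-criteria} to the single-block Lemma~\ref{kkt-criteria}, using the observation that the $\epsilon$-stationarity condition~\eqref{formu:epsilon-KKT-block} is nothing but a conjunction of $d$ separate single-block conditions: $z$ is $\epsilon$-stationary for~\eqref{prob:nonconvex-multi-block} precisely when, for each $i$, one has $\nabla_i f(z)^{\top}(y_i - z_i) + h_i(y_i) - h_i(z_i) \ge -\epsilon$ for all $y_i \in S_i$. For a fixed $z$, this $i$-th condition is exactly the single-block stationarity condition~\eqref{formu:epsilon-KKT} for the auxiliary one-block problem $\min_{x_i \in S_i}\, \nabla_i f(z)^{\top}(x_i - z_i) + h_i(x_i)$, whose smooth part is the linear map $x_i \mapsto \nabla_i f(z)^{\top}(x_i - z_i)$ (so $z_i$ plays the role of the current iterate and $\nabla_i f(z)$ the role of its gradient) and whose convex nonsmooth part is $h_i$. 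The subproblems~\eqref{func:linear-block} and~\eqref{func:upper-bound-block} restricted to block $i$ coincide, up to the irrelevant additive constant $f(z)$, with~\eqref{MLP} and~\eqref{MUP} for this auxiliary problem, and $\triangle_i L_z$, $\triangle_i U_z$ are the corresponding $\triangle L$, $\triangle U$.

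First I would fix an index $i$ and invoke Lemma~\ref{kkt-criteria} for the auxiliary problem above, with $S$ replaced by $S_i$ and the same $p$, $q$, $\lambda$. For~(i): since $\triangle_i L_z \le \epsilon$, Lemma~\ref{kkt-criteria}(i) yields $\nabla_i f(z)^{\top}(y_i - z_i) + h_i(y_i) - h_i(z_i) \ge -\epsilon$ for every $y_i \in S_i$; letting $i$ run over $1,\ldots,d$ gives~\eqref{formu:epsilon-KKT-block}. For~(ii): the two hypotheses $0 \le \epsilon \le \mbox{\rm diam}^p_p(S_i)\lambda$ and $\triangle_i U_z \le \frac{1}{2}\big(\epsilon / (\mbox{\rm diam}_p(S_i)\lambda^{1/p})\big)^q$ are exactly what Lemma~\ref{kkt-criteria}(ii) requires, so it delivers the $i$-th component of~\eqref{formu:epsilon-KKT-block}, and ranging over $i$ finishes the proof. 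If one prefers a self-contained argument instead of this black-box appeal, one simply rewrites the proof of Lemma~\ref{kkt-criteria} verbatim with $x, y, S, \nabla f, h$ replaced by $x_i, y_i, S_i, \nabla_i f, h_i$ and $z_L$ by $\bar z_i := \arg\min_{x_i \in S_i} L_i(x_i; z)$: the optimality inequality for $\bar z_i$, the splitting of $\nabla_i f(z)^{\top}(y_i - z_i) + h_i(y_i) - h_i(z_i)$ into the nonnegative optimality gap at $\bar z_i$ plus $-\triangle_i L_z$, and, for~(ii), the test point $y_i = z_i + s(\bar z_i - z_i)$ with $s = \big(\epsilon / (\mbox{\rm diam}^p_p(S_i)\lambda)\big)^{1/(p-1)} \in [0,1]$ together with the identities $\frac{1}{p-1} - \frac{q}{p} = 0$ and $q - \frac{1}{p-1} = 1$ that follow from $\frac{1}{p} + \frac{1}{q} = 1$.

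There is no genuine obstacle here; the only point needing a line of justification is that the reduction is legitimate, i.e.\ that for fixed $z$ the restricted objective $x_i \mapsto \nabla_i f(z)^{\top}(x_i - z_i) + h_i(x_i)$, as well as its $p$-powered perturbation in~\eqref{func:upper-bound-block}, is convex on the convex and compact set $S_i$, and that $\mbox{\rm diam}_p(S_i) < \infty$; both are immediate from the standing assumptions on $h_i$ and $S_i$, so every step of Section~\ref{Sec:Pre} transfers unchanged. Note that Assumption~\ref{assump-p} itself is not used in this lemma (the proof of Lemma~\ref{kkt-criteria} relied only on the definitions and the convexity of the nonsmooth term); the assumption re-enters only later, in the complexity analysis of the multi-block algorithms, where one needs $U_i(\cdot; z)$ to majorize $\Phi$ along block $i$, which follows by applying~\eqref{ineq:p-power} to the pair $z$ and $(z_1, \ldots, z_{i-1}, x_i, z_{i+1}, \ldots, z_d)$.
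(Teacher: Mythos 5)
Your proof is correct and matches the paper's intent exactly: the paper gives no explicit proof of Lemma~\ref{block-kkt-criteria}, stating only that it is ``similar to Lemma~\ref{kkt-criteria},'' and your blockwise reduction --- applying the single-block argument to each $i$ with $S_i$, $\nabla_i f$, $h_i$, and $\mbox{\rm diam}_p(S_i)$ in place of $S$, $\nabla f$, $h$, and $\mbox{\rm diam}_p(S)$ --- is precisely that argument spelled out. Your side remarks (that the stationarity condition~\eqref{formu:epsilon-KKT-block} is a conjunction of $d$ single-block conditions and that Assumption~\ref{assump-p} is not needed here) are both accurate.
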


Now we are ready to present our first algorithm for block optimization~\eqref{prob:nonconvex-multi-block}, where either the classical Jacobian updating rule or the MBI updating rule can be applied.
\begin{center}
\begin{tabular}{@{}llr@{}}\toprule
{\bf{Algorithm 5}}\\
\hline
{Let} $x^0 \in S$ {be given} and set $y^0 = x^0$.  \\
 \textbf{for} $k = 1,2,\cdots, N $, \textbf{do}\\
 \qquad \textbf{for} $i = 1,\cdots, d$, \textbf{do}\\
 \qquad \qquad $y^{k}_i=\arg\min_{y_i \in S_i} L_i(y_i;x^{k})$, and let $d^k_i = y^k_i - x^k_i$; \\
 \qquad \qquad $\alpha_{k,i} = \arg\min_{\alpha \in [0,1]}  \alpha\, \nabla_i f(x^{k})^{\top}d^k_i + \alpha^p\,\frac{\lambda}{2}\|d^k_i\|^p_p + (1 - \alpha)h_i(x^k_i) + \alpha\, h_i(y^k_i)$.\\
 \qquad \qquad Set $x^{k+1}_i=(1-\alpha_{k,i})x^{k}_i+ \alpha_{k,i} y^{k}_i$, when \it{Jacobian} updating rule is applied.\\
 \qquad \textbf{end for}\\
\qquad (Or, calculate ${i_0}\in \arg\max_{i\in \{1,\ldots,d \}}\triangle_i L_k$ and update $x^{k+1}_{i_0}=(1-\alpha_{k,{i_0}})x^{k}_{i_0}+ \alpha_{k,{i_0}} y^{k}_{i_0}$,\\
\qquad  $x_{{i}}^k = x_{{i}}^{k-1}\;\mbox{if}\; i\neq {i_0}$ when the MBI updating rule is applied.)\\
\textbf{end for}\\
\hline
\end{tabular}
\end{center}

The computational complexity bound for this algorithm is established as follows:

\begin{theorem}\label{theorm2:CG}
For any $0< \epsilon < \mbox{\rm diam}_p^p(\underline{S}) \lambda$, {\bf Algorithm 5} finds an $\epsilon$-stationary point of~\eqref{prob:nonconvex-multi-block} within $\left\lceil \frac{2(\mbox{\rm diam}_p(\overline{S})^p\lambda)^{q-1}(\Phi(x^{1}) - \Phi^*)}{\epsilon^{q}} \right\rceil$ steps.
\end{theorem}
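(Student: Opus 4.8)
The plan is to follow the single‑block argument of Theorem~\ref{theorm:CG} almost verbatim; the only genuinely new ingredient is a small bookkeeping device that collapses each multi-block step (Jacobian or MBI) to a single \emph{active} block, which is what keeps the bound free of any dependence on $d$.

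Fix an iteration $k$ and, for each block $i$, set $\phi_i(\alpha):=\alpha\,\nabla_i f(x^k)^\top d^k_i+\alpha^p\tfrac{\lambda}{2}\|d^k_i\|^p_p+(1-\alpha)h_i(x^k_i)+\alpha\,h_i(y^k_i)$, so that $\alpha_{k,i}=\arg\min_{\alpha\in[0,1]}\phi_i(\alpha)$ and, by the definitions of $L_i$ and $\triangle_i L_k$, one has $\phi_i(0)-\phi_i(\alpha)=\alpha\,\triangle_i L_k-\alpha^p\tfrac{\lambda}{2}\|d^k_i\|^p_p$ for every $\alpha$. Since $\phi_i(0)=h_i(x^k_i)$, optimality of $\alpha_{k,i}$ gives $\alpha_{k,i}\triangle_i L_k-\alpha_{k,i}^p\tfrac{\lambda}{2}\|d^k_i\|^p_p=\phi_i(0)-\phi_i(\alpha_{k,i})\ge 0$ for each $i$. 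Applying Assumption~\ref{assump-p} to the pair $(x^k,x^{k+1})$, using that $\|\cdot\|^p_p$ and $\sum_i h_i$ are separable across blocks and that convexity of $h_i$ yields $h_i(x^{k+1}_i)\le(1-\alpha_{k,i})h_i(x^k_i)+\alpha_{k,i}h_i(y^k_i)$, I obtain
\[
\Phi(x^k)-\Phi(x^{k+1})\ \ge\ \sum_{i}\Big(\alpha_{k,i}\,\triangle_i L_k-\tfrac{\lambda}{2}\alpha_{k,i}^p\|d^k_i\|^p_p\Big),
\]
where under the MBI rule only the single term $i=i_0$ is present since only block $i_0$ moves.

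Now comes the reduction. Let $j_k\in\arg\max_i\triangle_i L_k$ (for MBI, $i_0=j_k$ by the choice of $i_0$ in {\bf Algorithm 5}). Because every summand on the right is nonnegative, I may retain only the $j_k$-th one, obtaining $\Phi(x^k)-\Phi(x^{k+1})\ge\alpha_{k,j_k}\triangle_{j_k}L_k-\tfrac{\lambda}{2}\alpha_{k,j_k}^p\|d^k_{j_k}\|^p_p$, which is exactly the scalar situation of Theorem~\ref{theorm:CG}. Substituting the feasible test value $\alpha=\big(\epsilon/(\mbox{\rm diam}_p(\overline{S})^p\lambda)\big)^{1/(p-1)}\in(0,1)$ — it is $<1$ precisely because $\epsilon<\mbox{\rm diam}_p^p(\underline{S})\lambda\le\mbox{\rm diam}_p(\overline{S})^p\lambda$ — into $\phi_{j_k}(0)-\phi_{j_k}(\alpha_{k,j_k})\ge\phi_{j_k}(0)-\phi_{j_k}(\alpha)$, and bounding $\|d^k_{j_k}\|_p\le\mbox{\rm diam}_p(S_{j_k})\le\mbox{\rm diam}_p(\overline{S})$, the $p$-powered term is at most $\tfrac{\epsilon}{2}\alpha$; dividing by $\alpha$ and recalling $\tfrac{1}{p-1}=q-1$ (from $\tfrac1p+\tfrac1q=1$) gives
\[
\max_i\triangle_i L_k\ =\ \triangle_{j_k}L_k\ \le\ \big(\mbox{\rm diam}_p(\overline{S})^p\lambda/\epsilon\big)^{q-1}\big(\Phi(x^k)-\Phi(x^{k+1})\big)+\tfrac{\epsilon}{2}.
\]

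Finally I sum this over $k=1,\dots,N$, telescope using $\Phi(x^{N+1})\ge\Phi^*$, and divide by $N$. With $N=\big\lceil 2(\mbox{\rm diam}_p(\overline{S})^p\lambda)^{q-1}(\Phi(x^1)-\Phi^*)/\epsilon^{q}\big\rceil$ the averaged first term is at most $\tfrac{\epsilon}{2}$, so some $\tilde k\le N$ satisfies $\max_i\triangle_i L_{\tilde k}\le\epsilon$, i.e. $\triangle_i L_{\tilde k}\le\epsilon$ for every $i$; Lemma~\ref{block-kkt-criteria}(i) then certifies that $x^{\tilde k}$ is an $\epsilon$-stationary point of~\eqref{prob:nonconvex-multi-block}. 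The step that needs the most care is the nonnegativity-and-discard argument in the Jacobian case: it is exactly what keeps a factor $d$ out of the bound (naively retaining all $d$ summands and bounding each $p$-powered term by $\tfrac{\epsilon}{2}\alpha$ would produce $\tfrac{d\epsilon}{2}$), and it relies on the exact one-dimensional line search defining $\alpha_{k,i}$, which makes each block's contribution to the decrease of $\Phi$ individually nonnegative.
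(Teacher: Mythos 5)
Your proposal is correct and follows essentially the same route as the paper's proof: establishing the per-iteration decrease via Assumption~\ref{assump-p} and convexity of the $h_i$, discarding all but the maximizing block using the nonnegativity of each block's line-search gain (the paper's inequality~\eqref{formu-in-proof-21}), comparing $\alpha_{k,i_0}$ against the test step $\left(\epsilon/(\mbox{\rm diam}_p^p(\overline{S})\lambda)\right)^{1/(p-1)}$, and telescoping before invoking Lemma~\ref{block-kkt-criteria}(i). The $\phi_i$ bookkeeping is just a cleaner packaging of the same argument, and your handling of the Jacobian versus MBI cases matches the paper's.
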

\begin{proof} Since inequality~\eqref{ineq:p-power} holds, one has
\begin{eqnarray} \label{formu-in-proof-2}
&&\sum_{i=1}^{d}\left(-\nabla_i f(x^{k})^{\top}(x^{k+1}_i-x^{k}_i) - \frac{\lambda}{2}\|x^{k+1}_i - x^{k}_i\|^p_p+ h_i(x^{k}_i)-h_i(x^{k+1}_i)\right) \nonumber \\
&\le& f(x^{k}) - f(x^{k+1}) + \sum_{i=1}^{d}\left(h_i(x^{k}_i)-h_i(x^{k+1}_i)\right) \nonumber \\
&=& \Phi(x^{k}) - \Phi(x^{k+1}) .
\end{eqnarray}
When the Jacobian updating rule is applied, by the definition of $\alpha_{k,i}$ we have
$$
\alpha_{k,i} \left( \nabla_i f(x^{k})^{\top}(y^{k}_i-x^{k}_i)+h_i(y^{k}_i) - h_i(x^{k}_i) \right) + \frac{\lambda \alpha_{k,i}^p}{2}\|y^{k}_i - x^{k}_i\|^p_p \le 0.
$$
Recall that ${i_0}\in \arg\max_{i\in \{1,\ldots,d \}}\triangle_i L_k$. Therefore,
\begin{equation} \label{formu-in-proof-21}
\begin{array}{rcl}
&&-\alpha_{k,i_0} \left( \nabla_{i_0} f(x^{k})^{\top}(y^{k}_{i_0}-x^{k}_{i_0})+h_{i_0}(y^{k}_{i_0}) - h_{i_0}(x^{k}_{i_0}) \right) - \frac{\lambda \alpha_{k,{i_0}}^p}{2}\|y^{k}_{i_0} - x^{k}_{i_0}\|^p_p  \\
&\le&\sum_{i=1}^{d}\left(  -\alpha_{k,i} \left( \nabla_i f(x^{k})^{\top}(y^{k}_i-x^{k}_i)+h_i(y^{k}_i) - h_i(x^{k}_i) \right) - \frac{\lambda \alpha_{k,i}^p}{2}\|y^{k}_i - x^{k}_i\|^p_p \right)  \\
&=& \sum_{i=1}^{d}\left( -\nabla_i f(x^{k})^{\top}(\alpha_{k,i} (y^{k}_i-x^{k}_i)) + h_i(x^{k}_i)-(1 - \alpha_{k,i})h_i(x^k_i) - \alpha_{k,i} h_i(y^k_i) - \frac{\lambda }{2}\|\alpha_{k,i}(y^{k}_i - x^{k}_i)\|^p_p \right) \\
&\le & \sum_{i=1}^{d}\left(-\nabla_i f(x^{k})^{\top}(x^{k+1}_i-x^{k}_i) - \frac{\lambda}{2}\|x^{k+1}_i - x^{k}_i\|^p_p+ h_i(x^{k}_i)-h_i(x^{k+1}_i)\right), \\
\end{array}
\end{equation}
where the last inequality follows from the convexity of function $h_i(\cdot)$.

On the other hand, in the case of the MBI updating rule, we have
\begin{eqnarray*}
&& \sum_{i=1}^{d}\left(-\nabla_i f(x^{k})^{\top}(x^{k+1}_i-x^{k}_i) - \frac{\lambda}{2}\|x^{k+1}_i - x^{k}_i\|^p_p+ h_i(x^{k}_i)-h_i(x^{k+1}_i)\right)\\
&=& -\nabla_{i_0} f(x^{k})^{\top}(x^{k+1}_{i_0}-x^{k}_{i_0}) - \frac{\lambda}{2}\|x^{k+1}_{i_0} - x^{k}_{i_0}\|^p_p+ h_{i_0}(x^{k}_{i_0})-h_{i_0}(x^{k+1}_{i_0}).
\end{eqnarray*}
Thus, according to the convexity of $h_{i_0}(\cdot)$, inequality~\eqref{formu-in-proof-21} still holds.

Therefore, in either cases, by~\eqref{formu-in-proof-2},~\eqref{formu-in-proof-21} and the optimality of $\alpha_{k,{i_0}}$ we have
\begin{eqnarray*}
\left(\frac{\epsilon}{\mbox{\rm diam}_p^p(\overline{S})\lambda}\right)^{\frac{1}{p-1}}\max_{i}\triangle_i L_k &\le& \Phi(x^{k}) - \Phi(x^{k+1}) + \frac{\epsilon^{\frac{p}{p-1}} \lambda \mbox{\rm diam}_p^p(S_{i_0}) }{2(\mbox{\rm diam}_p^p(\overline{S})\lambda)^{\frac{p}{p-1}}}\\
&\le&\Phi(x^{k}) - \Phi(x^{k+1}) + \frac{\epsilon^{\frac{p}{p-1}}}{2(\mbox{\rm diam}_p^p(\overline{S})\lambda)^{\frac{1}{p-1}}}.
\end{eqnarray*}
Summing up the above inequality for $k=1,\ldots,N$ yields
\begin{eqnarray*}
\left(\frac{\epsilon}{\mbox{\rm diam}_p^p(\overline{S})\lambda}\right)^{\frac{1}{p-1}} N \min_{k\in\{1,\ldots,N\}}\max_{i}\triangle_i L_k  &\le& \left(\frac{\epsilon}{\mbox{\rm diam}_p^p(\overline{S})\lambda}\right)^{\frac{1}{p-1}} \sum_{k=1}^{N}\max_{i}\triangle_i L_k  \\
&\le& \Phi(x^{1}) - \Phi(x^{N+1})+ N\frac{\epsilon^{\frac{p}{p-1}}}{2(\mbox{\rm diam}_p^p(\overline{S})\lambda)^{\frac{1}{p-1}}}\\
 &\le& \Phi(x^{1}) - \Phi^*+ N\frac{\epsilon^{\frac{p}{p-1}}}{2(\mbox{\rm diam}_p^p(\overline{S})\lambda)^{\frac{1}{p-1}}}.
\end{eqnarray*}
Thus, if
$$N = \left\lceil \frac{2(\mbox{\rm diam}_p(\overline{S})^p\lambda)^{q-1}(\Phi(x^{1}) - \Phi^*)}{\epsilon^{q}} \right\rceil,
$$
then dividing both sides by $N \left(\frac{\epsilon}{\mbox{\rm diam}_p^p(\overline{S})\lambda}\right)^{\frac{1}{p-1}}$, we conclude that there must exist some $\tilde{k} \le N$ such that
$$
\triangle_i L_{\tilde{k}} \le \epsilon,\quad \mbox{for all}\quad i=1,\ldots, d,
$$
which combined with Lemma~\ref{block-kkt-criteria} implies that $x^{\tilde{k}}$ is an $\epsilon$-stationary point of~\eqref{prob:nonconvex-multi-block}.
\end{proof}

Our second algorithm to solve problem~\eqref{prob:nonconvex-multi-block} uses an upper bound for the objective function. Again, we can either apply the classical Jacobian updating rule or the MBI updating rule.
\begin{center}
\begin{tabular}{@{}llr@{}}\toprule
{\bf{Algorithm 6}}\\
\hline
{Let} $x^1 \in S$ {be given}  \\
 \textbf{for} $k = 1,2,\cdots, N $, \textbf{do}\\
 \qquad $y^{k+1}_i=\arg\min_{y_i \in S_i} U_i(y_i;x^{k})$ for $i=1,\ldots, d$.\\
 \qquad Set $x_i^{k+1} = y_i^{k+1},\, i=1,\ldots, d$, when \it{Jacobian} updating rule is applied.\\
 \qquad (Or, calculate ${i_0}\in \arg\max_{i\in \{1,\ldots,d \}}\triangle_i U_k$ and update $x_{{i_0}}^{k+1} = y_{{i_0}}^{k+1}, x_{{i}}^{k+1} = x_{{i}}^{k}\;\mbox{if}\; i\neq {i_0}$\\
 \qquad  when the {\it MBI} updating rule is applied.)\\
\textbf{end for}\\
\hline
\end{tabular}
\end{center}

Similarly, the computational complexity result for that algorithm can be established as follows.
\begin{theorem}\label{theorm:PPA-block} For any $0< \epsilon < \mbox{\rm diam}_p^p(\underline{S}) \lambda$, {\bf Algorithm 6} finds an $\epsilon$-stationary point of~\eqref{prob:nonconvex-multi-block} within $\left\lceil \frac{2(\mbox{\rm diam}_p(\overline{S})^p\lambda)^{q-1}(\Phi(x^{1}) - \Phi^*)}{\epsilon^{q}} \right\rceil$ steps.
\end{theorem}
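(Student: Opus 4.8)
The plan is to replay the proof of Theorem~\ref{theorm:PPA} block by block, combined with the telescoping trick already used in the proof of Theorem~\ref{theorm2:CG}. The starting point is inequality~\eqref{ineq:p-power} together with the separability $\|x-z\|_p^p=\sum_{i=1}^d\|x_i-z_i\|_p^p$, which is exactly what produces the chain~\eqref{formu-in-proof-2}:
\[
\sum_{i=1}^d\Big(-\nabla_i f(x^k)^\top(x^{k+1}_i-x^k_i)-\tfrac{\lambda}{2}\|x^{k+1}_i-x^k_i\|_p^p+h_i(x^k_i)-h_i(x^{k+1}_i)\Big)\le \Phi(x^k)-\Phi(x^{k+1}).
\]
The first step is to show that, for either updating rule, the left-hand side dominates $\max_i\triangle_i U_k$, where $\triangle_i U_k:=U_i(x^k_i;x^k)-U_i(y^{k+1}_i;x^k)$ is the (potential) improvement obtained by exactly minimizing the $i$-th $p$-powered upper model. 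In the Jacobian case $x^{k+1}_i=y^{k+1}_i$ for every $i$, so the $i$-th summand equals $\triangle_i U_k\ge 0$ and the sum equals $\sum_i\triangle_i U_k\ge\max_i\triangle_i U_k$. In the MBI case only block $i_0=\arg\max_i\triangle_i U_k$ moves, every other summand vanishes, and the sum collapses to $\triangle_{i_0}U_k=\max_i\triangle_i U_k$. Either way one gets the single clean inequality $\max_i\triangle_i U_k\le\Phi(x^k)-\Phi(x^{k+1})$.

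Next I would sum this over $k=1,\dots,N$ and telescope, obtaining $N\min_{1\le k\le N}\max_i\triangle_i U_k\le\Phi(x^1)-\Phi^*$, hence some index $\tilde k\le N$ with $\max_i\triangle_i U_{\tilde k}\le(\Phi(x^1)-\Phi^*)/N$. Substituting $N=\lceil 2(\mbox{\rm diam}_p(\overline S)^p\lambda)^{q-1}(\Phi(x^1)-\Phi^*)/\epsilon^q\rceil$ and using the identity $q=p(q-1)$ (equivalently $q/p=q-1$) gives $\max_i\triangle_i U_{\tilde k}\le\tfrac12\,\epsilon^q/(\mbox{\rm diam}_p(\overline S)^p\lambda)^{q-1}$. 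Since $\mbox{\rm diam}_p(S_i)\le\mbox{\rm diam}_p(\overline S)$ and $q-1>0$, this yields $\triangle_i U_{\tilde k}\le\tfrac12\big(\epsilon/(\mbox{\rm diam}_p(S_i)\lambda^{1/p})\big)^q$ for every $i$; and the hypothesis $\epsilon<\mbox{\rm diam}_p^p(\underline S)\lambda\le\mbox{\rm diam}_p^p(S_i)\lambda$ supplies the second premise of Lemma~\ref{block-kkt-criteria}(ii). That lemma then certifies $x^{\tilde k}$ as an $\epsilon$-stationary point of~\eqref{prob:nonconvex-multi-block}, completing the argument.

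The $q=p(q-1)$ bookkeeping and the constant chasing are routine. The genuinely delicate point is the first step: making sure that for \emph{both} updating rules the per-iteration progress, measured the right way, dominates $\max_i\triangle_i U_k$ rather than merely one particular $\triangle_i U_k$. For the Jacobian rule this rests on nonnegativity of every $\triangle_i U_k$ (each block is updated to the exact minimizer of its own upper model), and for the MBI rule on the fact that the unchanged blocks contribute nothing to~\eqref{formu-in-proof-2}. The other thing to watch is keeping $\mbox{\rm diam}_p(\overline S)$ (which must enter $N$, since the complexity is driven by the worst block) separate from $\mbox{\rm diam}_p(\underline S)$ (which governs the admissible range $0<\epsilon<\mbox{\rm diam}_p^p(\underline S)\lambda$ needed to invoke Lemma~\ref{block-kkt-criteria}(ii) simultaneously for all $i$).
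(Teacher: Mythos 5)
Your argument is correct and is precisely the route the paper intends: the paper omits this proof, deferring to ``similar arguments as in the proof of Theorem~\ref{theorm2:CG},'' and your write-up correctly merges the upper-bound telescoping of Theorem~\ref{theorm:PPA} with the Jacobian/MBI block bookkeeping of Theorem~\ref{theorm2:CG}, including the two points that actually require care (nonnegativity of each $\triangle_i U_k$ so that the sum dominates the max in the Jacobian case, and the separation of $\mbox{\rm diam}_p(\overline{S})$ in $N$ from $\mbox{\rm diam}_p(\underline{S})$ in the admissible range of $\epsilon$). You also correctly observe, implicitly, that the exact block minimization makes the telescoping cleaner here than in Theorem~\ref{theorm2:CG} (no step-size rescaling by $(\epsilon/(\mbox{\rm diam}_p^p(\overline{S})\lambda))^{1/(p-1)}$ is needed), so nothing is missing.
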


Similar arguments as in the proof of Theorem~\ref{theorm2:CG} can be used to prove the above theorem; we leave the details to the interested reader.

\section{Numerical Experiments} \label{numerical}
In this section, we provide numerical performance of our algorithms for solving two nonconvex problems: the problem of finding the leading sparse principle component of tensor and the penalized zero-variance linear discriminant analysis.
\subsection{Computing the Leading Sparse Principle Component of Tensor}
The problem of finding the principle component (PC) that explains the most variance of a tensor $\mathcal{A}$ (with degree $d$) can be formulated as:
\begin{equation*}
\begin{array}{ll}
\min & \| \mathcal{A} - \lambda x_1\otimes x_2 \otimes \cdots \otimes x_d \|^2_2 \\
\mbox{s.t.} & \lambda\in \RR,\, \| x_i\|_2^2 =1, i=1,2,\ldots,d,
\end{array}
\end{equation*}
where `$\otimes$' is the tensor outer-product operation.
This problem bears different names including the tensor best rank-one approximation~\cite{KofidisRegalia02} and the Z-eigenvalue problem~\cite{Qi05,Lim05}, and various solution methods have been proposed:~\cite{CHLZ12,JiangMaZhang14,KofidisRegalia02,QiWangWang09}.

Like in the matrix case, sparsity is desirable in tensor decomposition under various environments~\cite{Allen12}. Consider the following sparse tensor PCA problem:
\begin{equation}\label{prob:sparse-PCA}
\begin{array}{ll}
\min & \| \mathcal{A} - \lambda x_1\otimes x_2 \otimes \cdots \otimes x_d \|^2_2 + \rho\sum\limits_{i=1}^{d}\|x_i\|_0 \\
\mbox{s.t.} & \lambda\in \RR,\, \| x_i\|_2^2 =1, i=1,2,\ldots,d,
\end{array}
\end{equation}
which is equivalent to
\begin{equation*}
\begin{array}{ll}
\min & -\mathcal{A} (x_1, x_2 , \cdots , x_d) + \rho\sum\limits_{i=1}^{d}\|x_i\|_0  \\
\mbox{s.t.} & \| x_i\|_2 \le 1, i=1,2,\ldots,d.
\end{array}
\end{equation*}
To apply the algorithms discussed in the previous section, we replace $\|\cdot\|_0$ by $\|\cdot\|_1$, and arrive at the following formulation
\begin{equation}\label{prob:tensorPCA-rx1}
\begin{array}{ll}
\min & -\mathcal{A} (x_1, x_2 , \cdots , x_d) + \rho\sum\limits_{i=1}^{d}\|x_i\|_1 \\
\mbox{s.t.} & \| x_i\|_2 \le 1, i=1,2,\ldots,d.
\end{array}
\end{equation}

Denote the matrix
$$
\mathcal{A}(x^{-ij}) := \mathcal{A}(x_1,\cdots,x_{j-1},\cdot,x_{j+1},\cdots,x_{i-1},\cdot,x_{i+1},\cdots,x_d),
$$
and let $\tau = \max_{\|x\|_2 \le 1}\| \mathcal{A}(x^{-ij}) \|_2 $.
Then for any $x=(x_1^{\top},\cdots, x_d^{\top})^{\top}, y=(y_1^{\top},\cdots, y_d^{\top})^{\top}$, and index $i$, we have
\begin{eqnarray*}
&& \|\mathcal{A}(x_1,\cdots,x_{i-1},\cdot,x_{i+1},\cdots,x_d) - \mathcal{A}(y_1,\cdots,y_{i-1},\cdot,y_{i+1},\cdots,y_d)\|_2 \\
&=& \bigg{\|}\sum_{j\neq i} \mathcal{A}(y_1,\cdots,y_{j-1},x_{j},\cdots,x_{i-1},\cdot,x_{i+1},\cdots,x_d) - \mathcal{A}(y_1,\cdots,y_{j},x_{j+1},\cdots,x_{i-1},\cdot,x_{i+1},\cdots,x_d)\bigg{\|}_2\\
&\le& \sum_{j\neq i}  \|\mathcal{A}(y_1,\cdots,y_{j-1},x_{j},\cdots,x_{i-1},\cdot,x_{i+1},\cdots,x_d) - \mathcal{A}(y_1,\cdots,y_{j},x_{j+1},\cdots,x_{i-1},\cdot,x_{i+1},\cdots,x_d)\|_2\\
&\le & \tau \sum_{j\neq i} \|x_j - y_j \|_2.
\end{eqnarray*}
Consequently,
$$
\| \nabla \mathcal{A}(x) - \nabla \mathcal{A}(y) \|_2 \le \tau (d-1) \sum_{i=1}^{d}\|x_i - y_i \|_2 \le \tau d(d-1) \|x-y\|_2,
$$
which means that Assumption~\ref{assump-p} holds for $p=q=2$. Therefore our {\bf{Algorithm 5}} and {\bf{Algorithm 6}} can be applied to solve problem~\eqref{prob:tensorPCA-rx1}. When {\bf{Algorithm 5}} is applied, the subproblem is in the form of
\begin{equation}\label{subprob:sparce-tensor-PCA}\min_{\|y\|_2^2 \le 1} \{ -y^{\top}b + \rho\|y\|_1 \}.
\end{equation}
Denote $z(j)=\sign(b(j))\max\{ |b(j)| - \rho, 0 \}\;\forall j$. Problem~\eqref{subprob:sparce-tensor-PCA} has a closed form solution
\[
y^* = \left\{\begin{array}{ll}
z/\|z\|_2, & \mbox{if}\; \|z\|_2 \neq 0 \\
0, & \mbox{otherwise.}
\end{array}\right.
\]
In {\bf{Algorithm 6}} the subproblem under consideration is given by $\min_{\|y\|_2^2 \le 1} \{ -y^{\top}b + \rho\|y\|_1 + \frac{\lambda}{2}\|y\|^2_2 \}$, which has a closed form solution
$y^* = \frac{z}{\lambda + \max\{ 0, \|z\|_2-\lambda \}}$.

One undesirable property of the relaxed formulation~\eqref{prob:tensorPCA-rx1} is that we may possibly get a zero solution; i.e.\ $x_i=0$ for some $i$, which leads to $\mathcal{A} (x_1, x_2 , \cdots , x_d)=0$. To prevent this from happening, we also apply the BCD method with the Jacobian updating rule to the following equality constraint problem:
\begin{equation}\label{prob:tensorPCA-rx2}
\begin{array}{ll}
\min & -\mathcal{A} (x_1, x_2 , \cdots , x_d) + \rho\sum\limits_{i=1}^{d}\|x_i\|_1 \\
\mbox{s.t.} & \| x_i\|_2 = 1, i=1,2,\ldots,d,
\end{array}
\end{equation}
and compare the results with those returned by our proposed algorithms in Table~\ref{tab:tensor-pca}.

In the tests, we let $\lambda = 20$, $\rho=0.85$,
and set the maximum iteration number to be $2000$; we only apply the Jacobian updating rule in the implementation of {\bf Algorithm 5} and {\bf Algorithm 6}.
For each fixed dimension, we randomly generate $10$ instances which are the fourth order tensors and the corresponding problems are solved by the three methods, starting from the same initial point.
In Table~\ref{tab:tensor-pca}, `Val.' refers to the value $\mathcal{A} (x_1, x_2 , \cdots , x_d)$. From this table,
we see that {\bf Algorithm~5} is the most stable method for the sparse tensor PCA problem, as it is able to find a nonzero local minimum within a few hundred steps in most cases, with reasonably sparse solutions. For the same collection of instances, {\bf Algorithm 6} falls into the zero solution in many cases, while the BCD method for~\eqref{prob:tensorPCA-rx2} on the other hand, ends up in a local minimum point instead of local maximum for quite a few instances.
\begin{table}[ht]{\footnotesize
\centering
\begin{tabular}{|c|c|c|c|c|c|c|c|c|c|}
\hline
Inst.  \#&\multicolumn{3}{|c|}{ BCD}&\multicolumn{3}{|c|}{Algorithm $6$}&\multicolumn{3}{|c|}{Algorithm $5$}\\
\hline
&Val.& $\sum\limits_{i=1}^{d}\|x_i\|_0$& Iter.& Val.& $\sum\limits_{i=1}^{d}\|x_i\|_0$& Iter.&Val.& $\sum\limits_{i=1}^{d}\|x_i\|_0$& Iter. \\\hline
\multicolumn{10}{|c|}{Dimension $n=8$}\\\hline
 1 & 4.99 & 20 & 2000  & 7.76 & 21 &  99 & 6.76 & 18 &  92\\
 2 & 6.42 & 14 &  96  & 0.00 &  0  &  20 & 0.00 &  8 &  13\\
 3 & -6.41 & 16 &  46  & 6.56 & 16 & 213 & 6.56 & 16 & 139\\
 4 & -7.16 & 17 &  83  & 0.00 &  0  &  24 & 0.00 & 16 &  15\\
 5 & -6.36 & 18 &  99  & 6.78 & 18 & 185 & 6.19 & 15 &  91\\
 6 & -8.36 & 20 & 132  & 8.98 & 22 & 156 & 8.98 & 22 & 148\\
 7 & 6.10 & 18 & 2000  & 7.51 & 18 & 154 & 7.51 & 18 &  98\\
 8 & -7.14 & 20 &  95  & 7.14 & 20 & 357 & 7.14 & 20 & 295\\
 9 & 8.73 & 21 & 127  & 8.50 & 21 & 227 & 8.50 & 21 & 155\\
10 & 6.25 & 16 & 327  & 7.41 & 19 & 207 & 7.41 & 19 & 119 \\
 \hline
\multicolumn{10}{|c|}{Dimension $n=12$}\\\hline
 1 & -7.80 & 20 & 2000  & 8.34 & 27 & 841 & 8.23 & 23 & 166 \\
 2 & -9.52 & 31 & 293  & 8.16 & 23 & 222 & 8.73 & 26 & 315\\
 3 & 9.17 & 28 & 282  & 10.19 & 33 & 528 & 10.19 & 33 & 362\\
 4 & 8.50 & 22 & 257  & 7.63 & 21 & 230 & 8.74 & 22 & 296\\
 5 & -9.58 & 22 & 2000  & 8.61 & 23 & 314 & 8.61 & 24 & 221\\
 6 & 9.95 & 28 & 267  & 8.48 & 22 & 211 & 8.48 & 22 & 151\\
 7 & 8.88 & 23 & 142  & 0.00 &  0  &  18 & 0.00 & 24 &  11\\
 8 & -8.42 & 27 & 263  & 8.55 & 27 & 250 & 8.55 & 27 & 154\\
 9 & -8.64 & 26 & 2000  & 8.81 & 24 & 166 & 8.49 & 31 &  53\\
10 & 9.54 & 30 & 208  & 8.89 & 26 & 131 & 8.89 & 26 &  97 \\
\hline
\multicolumn{10}{|c|}{Dimension $n=20$}\\\hline
 1 & 6.80 & 39 & 2000  & 0.00 &  0  &  31 & 11.52 & 41 & 290 \\
 2 & -12.95 & 49 & 278  & 0.00 & 0   &  17 & 11.77 & 44 & 112\\
 3 & -11.44 & 38 & 277  & 11.08 & 39 & 156 & 12.71 & 42 & 195\\
 4 & -11.22 & 40 & 766  & 0.00 &  0  &  20 & 11.50 & 39 & 141\\
 5 & 11.51 & 38 & 1267  & 0.00 &  0  &  18 & 0.00 & 0   &  11\\
 6 & 12.42 & 44 & 808  & 11.44 & 36 & 225 & 11.47 & 36 & 144\\
 7 & -12.22 & 47 & 2000  & 11.28 & 34 & 269 & 13.20 & 49 & 241\\
 8 & 11.35 & 39 & 2000  & 11.03 & 35 & 191 & 10.80 & 40 & 211 \\

  9 & 11.74 & 44 & 2000  & 11.88 & 37 & 194 & 12.34 & 47 & 199\\
10 & -11.49 & 46 & 493  & 11.60 & 42 & 172 & 11.79 & 43 & 454\\
\hline
\multicolumn{10}{|c|}{Dimension $n=30$}\\\hline
 1 & 14.13 & 52 & 1673  & 0.00 & 0   &  12 & 15.22 & 58 & 311\\
 2 & 0.82 & 41 & 2000  & 0.00 &  0 &  25 & 14.36 & 53 & 214\\
 3 & -15.23 & 59 & 1589  & 0.00 &  0  &  15 & 14.28 & 55 & 238\\
 4 & 0.25 & 59 & 2000  & 0.00 &  0  &  15 & 0.00 &  0  &   9\\
 5 & 0.74 & 43 & 2000  & 14.36 & 56 & 175 & 14.36 & 56 & 101\\
 6 & 0.53 & 35 & 2000  & 13.33 & 47 & 454 & 15.24 & 58 & 267\\
 7 & 0.48 & 43 & 2000  & 13.19 & 51 & 377 & 13.39 & 51 & 162\\
 8 & 0.38 & 40 & 2000  & 0.00 &  0  &  14 & 0.00 &  0  &   9\\
 9 & -13.32 & 48 & 1523  & 13.93 & 56 & 692 & 12.36 & 47 & 374\\
10 & 11.84 & 40 & 1034  & 13.75 & 51 & 455 & 15.18 & 61 & 361 \\
\hline
\end{tabular}
\caption{Numerical results for sparse tensor PCA problem.}
\label{tab:tensor-pca}}
\end{table}

\subsection{Penalized Zero-Variance Linear Discriminant Analysis}
The penalized version of the zero-variance discriminant analysis is presented by Ames and Hong~\cite{AmesHong14}, aiming to perform linear discriminant analysis and feature selection on high-dimensional data simultaneously. To be more specific, the problem under consideration can be described as follows:
\begin{equation}\label{prob:SZVD}
\min_{x \in \RR^n,\;x^{\top}x \le 1}-\frac{1}{2}x^{\top}N^{\top}BNx + \gamma\sum_{i=1}^{m}\sigma_i | (DNX)_i |,
\end{equation}
where $B \in \RR^{m \times m}$ is a positive semidefinite matrix, $D \in \RR^{m \times m}$ and $N \in \RR^{m \times n}$ are orthogonal matrices. Thus, the objective is the summation of a concave function and a convex function. Due to Corollary~\ref{coro:concave}, {\bf Algorithm 1} with $\alpha_k = 1$ for all $k$ can achieve $O(1/\epsilon)$ iteration complexity. In particular, at the $k$-th iteration, based on point $x^k$ we can find the next point $x^{k+1}$ by optimizing a homogeneous convex problem:
$$
x^{k+1} := \arg_{x \in \RR^n} -(x^k)^{\top}N^{\top}BNx + \gamma\sum_{i=1}^{m}\sigma_i | (DNX)_i |,
$$
which can be solved by CVX~\cite{cvx} efficiently. While Ames and Hong~\cite{AmesHong14} proposes {\it alternating direction method of multipliers} (ADMM) to solve~\eqref{prob:SZVD}. The advantage of that approach is that the subproblems have closed form solutions.

For fixed dimension, we compare our approach with ADMM in~\cite{AmesHong14} for $10$ randomly generated instances, and the results are provided in Table~\ref{tab:SZVD}. It appears that these two methods produce sequences that converge to the same point, but {\bf Algorithm 1} costs much less iterations.
\begin{table}[ht]{
\centering
\begin{tabular}{|c|c|c|c|c|}
\hline
Inst.  \#&\multicolumn{2}{|c|}{Algorithm 1}&\multicolumn{2}{|c|}{ADMM}\\
\hline
&Obj.Val.&Iter.& Obj.Val.&Iter. \\\hline
\multicolumn{5}{|c|}{Dimension $n=50$, $m=100$}\\\hline
 1 & -279.372 &   93 & -279.370 & 437 \\
 2 & -273.868 &   61 & -273.868 & 739 \\
 3 & -263.741 &   72 & -263.743 & 426 \\
 4 & -257.462 &   82 & -257.465 & 566 \\
 5 & -263.832 &  114 & -263.848 & 652 \\
 6 & -291.784 &   42 & -291.784 & 220 \\
 7 & -277.502 &   46 & -277.503 & 250 \\
 8 & -279.291 &   65 & -279.297 & 343 \\
 9 & -265.733 &  145 & -265.741 & 1029 \\
 10 & -269.525 &   62 & -269.523 & 273 \\\hline
\multicolumn{5}{|c|}{Dimension $n=100$, $m=200$}\\\hline
 1 & -574.195 &  133 & -574.199 & 728 \\
 2 & -553.266 &  78 & -553.270 &  587 \\
 3 & -564.459 &  161 & -564.456 &  648\\
 4 & -586.036 &   46 & -586.039 &  240\\
 5 & -554.430 &  135 & -554.447 & 880 \\
 6 & -563.877 &   59 & -563.883 & 308 \\
 7 & -559.835 &   72 & -559.844 & 434 \\
 8 & -557.033 &  157 & -557.042 & 823 \\
 9 & -558.926 &  155 & -558.929 & 674 \\
 10 & -573.841 &  198 & -573.861 & 1019 \\\hline
 \multicolumn{5}{|c|}{Dimension $n=200$, $m=400$}\\\hline
 1 & -1172.918 &   85 & -1172.916 & 638  \\
 2 & -1154.821 &  168 & -1154.834 & 993 \\
 3 & -1139.545 &  129 & -1139.554 & 662 \\
 4 & -1121.581 &  245 & -1121.587 & 922 \\
 5 & -1176.465 &  116 & -1176.465 & 640 \\
 6 & -1149.466 &  106 & -1149.472 & 521\\
 7 & -1151.241 &   90 & -1151.245 & 417 \\
 8 & -1156.047 &  339 & -1156.068 & 1533 \\
 9 & -1135.454 &   59 & -1135.518 & 474 \\
 10 & -1140.661 &  512 & -1140.704 & 2840 \\
\hline
\end{tabular}
\caption{Numerical results for Penalized Zero-Variance Linear Discriminant Analysis.}
\label{tab:SZVD}}
\end{table}


\begin{thebibliography}{99}
\bibitem{Allen12}
G.I.\ Allen, {\em Sparse Higher-Order Principal Components Analysis}, Proceedings of the 15th International Conference on Artificial Intelligence and Statistics, 2012.

\bibitem{AmesHong14}
B.P.W.\ Ames and M.\ Hong, {\em Alternating Direction Method of Multipliers for Sparse Zero-Variance Discriminat Analysis and Principal Component Analysis}
arXiv:1401.5492, 2014.

\bibitem{BT09}
A.\ Beck and M.\ Teboulle, {\em A Fast Iterative Shrinkage-Thresholding Algorithm for Linear Inverse Problems}, SIAM Journal on Imaging Sciences, 2, 183-202, 2009.

\bibitem{BeckTetruashvili13}
A.\ Beck and L.\ Tetruashvili, {\em On the convergence of block coordinate descent type methods},  SIAM
  Journal on Optimization, 23, 2037-2060, 2013.

\bibitem{Bertsekas73}
D.P.\ Bertsekas, {Stochastic Optimization Problems with Nondifferentiable Cost Functional}, Journal of Optimization Theory and Application, 12, 218-231, 1973

\bibitem{CHLZ12}
B.\ Chen, S.\ He, Z.\ Li, and S.\ Zhang, {\em Maximum Block Improvement and Polynomial Optimization}, SIAM Journal on Optimization, 22, 87-107, 2012.

\bibitem{CartisGouldToint10}
C.\ Cartis, N.I.M.\ Gould, and Ph.L.\ Toint, {\em On the complexity of steepest descent, Newton's and regularized
Newton's methods for nonconvex unconstrained optimization}, SIAM Journal on Optimization,
20(6), 2833-2852, 2010.

\bibitem{CartisGouldToint11}
C.\ Cartis, N.I.M.\ Gould, and Ph.L.\ Toint, {\em Adaptive cubic overestimation methods for unconstrained
optimization. Part II: worst-case function-evaluation complexity}, Mathematical Programming, Series A, 130(2), 295-319, 2011.

\bibitem{CartisGouldToint13a}
C.\ Cartis, N.I.M.\ Gould, and Ph.L.\ Toint, {\em An adaptive cubic regularization algorithm for nonconvex
optimization with convex constraints and its function-evaluation complexity}, IMA Journal of Numerical Analysis, (to appear), 2013.

\bibitem{CartisGouldToint13b}
C.\ Cartis, N.I.M.\ Gould, and Ph.L.\ Toint,
On the complexity of finding first-order critical points in constrained nonlinear optimization,
Mathematical Programming Series A, (to appear), 2013.

\bibitem{ChenGeWangYe14}
X.\ Chen, D.\ Ge, Z.\ Wang, and Y.\ Ye, {\em Complexity of Unconstrained $L_2-L_p$ Minimization}, Mathematical Programming, 143, 371-383, 2014.

\bibitem{DangLan2013}
C.D.\ Dang and G.\ Lan, {\em Stochastic Block Mirror Descent Methods for Nonsmooth and Stochastic Optimization}, arXiv:1309.2249, 2013

\bibitem{Devo-Fran-Nesterov-2013}
O.\ Devolder, G.\ Fran\c{c}ois and Yu.\ Nesterov, {\em First-Order Methods of Smooth Convex
Optimization with Inexact Oracle}, Mathematical Programming Series A, (to appear), 2013.

\bibitem{DuchiBartlettWainwright12}
J.C.\ Duchi, P.L.\ Bartlett, and M.J.\ Wainwright, {\em Randomized Smoothing for Stochastic Optimization}, SIAM Journal on Optimization, 22, 674-701, 2012.

\bibitem{FrankWolfe56}
M.\ Frank and P.\ Wolfe, {\em An Algorithm for Quadratic Programming, Naval Research Logistics Quarterly}, 3, 95-110, 1956.

\bibitem{FreundGrigas13}
R.M.\ Freund and P.\ Grigas, {\em New Analysis and Results for the Conditional Gradient Method}, arXiv:1307.0873, 2013.

\bibitem{GeHeHe14}
D.\ Ge, R.\ He, and S.\ He, {\em A Three Criteria Algorithm for $L_2-L_p$ Minimization
Problem With Linear Constraints}, Techniqual Report, 2014.

\bibitem{GhadimiLanZhang13}
S.\ Ghadimi, G.\ Lan, and H.\ Zhang, {\em Mini-batch Stochastic Approximation Methods for Monconvex Stochastic Composite Optimization}, arXiv:1308.6594, 2013.

\bibitem{GhadimiLan13}
S.\ Ghadimi and G.\ Lan, {\em Stochastic First- and Zeroth-order Methods for Nonconvex Stochastic Programming}, SIAM Journal on Optimization, 23(4), 2341-2368, 2013.

\bibitem{cvx}
M.\ Grant and S.\ Boyd,
{\em CVX: Matlab software for disciplined convex programming, version 1.21},
 http://cvxr.com/cvx, May 2010.

\bibitem{HarchaouiJuditskyNemirovski12}
Z.\ Harchaoui, A.\ Juditsky, and A.S.\ Nemirovski, {\em Conditional Gradient Algorithm for Machine Learing}, NIPS OPT workshop, 2012.

\bibitem{HongWangRazaviyaynLuo13}
M.\ Hong, X.\ Wang, M.\ Razaviyayn, and Z.-Q.\ Luo, {\em Iteration complexity analysis of block coordinate
  descent methods}, arXiv:1310.6957, 2013.

\bibitem{JiangMaZhang14}
B.\ Jiang, S.\ Ma, and S.\ Zhang, {\em Tensor Principal Component Analysis via Convex Optimization}, Mathematical Programming Series A, (to appear), 2014.

\bibitem{KofidisRegalia02}
E.\ Kofidis and P.A.\ Regalia, {\em On the best rank-1 approximation of higher-order supersymmetric
tensors}, SIAM Journal on Matrix Analysis and Applications, 23, 863¨C884, 2002.

\bibitem{Lan12}
G.\ Lan, {\em An optimal method for stochastic composite optimization}, Mathematical Programming, 133, 365-397, 2012.

\bibitem{Lan13}
G.\ Lan, {\em The Complexity of Large-scale Convex Programming under a Linear Optimization Oracle}, arXiv:1309.5550, 2013.

\bibitem{Lim05}
L.-H.\ Lim, {\em Singular Values and Eigenvalues of Tensors: A Variational Approach}, Proceedings of the IEEE International Workshop on Computational Advances in Multi-Sensor Adaptive Processing, 1, 129-132, 2005.

\bibitem{LMDZ14}
Y.\ Liu, S.\ Ma, Y.\ Dai and S.\ Zhang, {\em A Smoothing SQP Framework for a Class of Composite $L_q$ Minimization over Polyhedron}, 2014. \url{http://www.optimization-online.org/DB_HTML/2014/07/4467.html}.

\bibitem{NemirovskiJuditskyLanShapiro09}
A.S.\ Nemirovski, A.\ Juditsky, G.\ Lan, and A.\ Shapiro, {\em Robust Stochastic Approximation Approach to Stochastic Programming}, SIAM Journal on Optimization, 19, 1574-1609, 2009.

\bibitem{NemirovskiYudin83}
A.S.\ Nemirovski and D.\ Yudin, {\em Problem Complexity and Method Efficiency in Optimization}, Wiley-Interscience Series in Discrete Mathematics, John Wiley, XV, 1983.

\bibitem{Nesterov04}
Yu.\ Nesterov, {\em Introductory Lectures on Convex Optimization. Applied Optimization}, Kluwer Academica
Publishers, Dordrecht, The Netherlands, 2004.

\bibitem{Qi05}
L.\ Qi, {\em Eigenvalues of a Real Supersymmetric Tensor}, Journal of Symbolic Computation, 40, 1302-1324, 2005.

\bibitem{QiWangWang09}
L.\ Qi, F.\ Wang, and Y.\ Wang, {\em Z-eigenvalue methods for a global polynomial optimization
problem}, Mathematical Programming, Series A, 118, 301-316, 2009.

\bibitem{RazaviyaynHongLuo13}
M.\ Razaviyayn, M.\ Hong, and Z.-Q.\ Luo, {\em A Unified Convergence Analysis of Block Successive Minimization Methods for Nonsmooth Optimization}, SIAM Journal on Optimization, 23, 1126-1153, 2013.

\bibitem{RobbinsMonro1951}
H.\ Robbins and S.\ Monro, {\em A stochastic approximation method}, Annals of Mathematical Statistics, 22, 400-407, 1951.

\bibitem{SahaTewari2013}A. Saha and A. Tewari, {\em On the Nonasymptotic Convergence of Cyclic Coordinate Descent Methods}, SIAM Journal on Optimization, 23, 576-601, 2013.

\bibitem{Tseng01}
P.\ Tseng, {\em Convergence of a Block Coordinate Descent Method for Nondifferentiable Minimization}, Journal of Optimization Thoery and Applications, 103, 475-494, 2001.

\bibitem{WangMaYuan13}
X.\ Wang, S.\ Ma, and Y.-X.\ Yuan, {\em Penalty Methods with Stochastic Approximation for Stochastic Nonlinear Programming}, arXiv:1312.2690, 2013.

\bibitem{XuYin2013}
Y.\ Xu and W.\ Yin, {\em A Block Coordinate Descent Method for Regularized Multiconvex Optimization with Applications to Nonnegative Tensor Factorization and Completion}, SIAM Journal on Imaging Sciences 6, 1758-1789, 2013

\end{thebibliography}
\end{document}